\newtheorem{theorem}{Theorem}[section]
\newtheorem{lemma}[theorem]{Lemma}
\newtheorem{proposition}[theorem]{Proposition}
\newtheorem{corollary}[theorem]{Corollary}
\theoremstyle{remark}
\newcommand\R{{\mathbb R}}
\newcommand\E{{\textnormal E}}
\newcommand\PP{{\textnormal P}}
\begin{document}

\begin{frontmatter}
\title{Asymptotic formula for the conjunction probability of smooth stationary Gaussian fields}
\runtitle{Conjunction probability of smooth stationary Gaussian fields}

\begin{aug}
\author[A]{\fnms{ Viet-Hung} \snm{Pham}\ead[label=e2]{pvhung@math.ac.vn}}

\address[A]{Institute of Mathematics, Vietnam Academy of Science and Technology (VAST),\\
              Hanoi, Vietnam,
\printead{e2}}
\end{aug}

\begin{abstract}
 Let $\{X_i(t):\, t\in S\subset \R^d  \}_{i=1,2,\ldots,n}$ be independent copies of a stationary centered Gaussian field with almost surely smooth sample paths. In this paper, we are interested in the conjunction probability defined as $\PP \left( \exists t\in S: \, X_i(t) \geq u, \, \forall i=1,2,\ldots,n \right)$ for a given threshold level $u$. As $u\rightarrow \infty$, we will provide an asymptotic formula for the conjunction probability. This asymptotic formula is derived from the behaviour of the volume of the set of local maximum points. The proof relies on a result of Aza\"\i s and Wschebor describing the shape of the excursion set of a stationary centered Gaussian field. Our result confirms partially the validity of Euler characteristic method.
\end{abstract}

\begin{keyword}[class=MSC2010]
\kwd[Primary ]{60G15}
\kwd{60G60}
\kwd[; secondary ]{62G09}
\end{keyword}

\begin{keyword}
\kwd{Conjunction probability}
\kwd{Gaussian fields}
\kwd{Euler characteristic method}
\kwd{Excursion set}
\kwd{Pickands constant}
\end{keyword}

\end{frontmatter}

\section{Introduction}
Let  $X$  be  a  real  stationary centered Gaussian field with unit variance and almost surely smooth sample paths. Assume more  that it  is defined on a compact set $S\subset \R^d$. Consider  $n$ independent copies $\{X_i(t); i=1,2,\ldots,n\} $ of $X$. In this paper, we are interested in the  behaviour of \textit{conjunction probability}, 
\begin{equation}
\PP \left( \exists t\in S: \, X_i(t) \geq u, \, \forall i \in \overline{1,n}\right),
\end{equation}
where $u$ is a level that will tend to infinity. The probability above  can also be expressed  in different manners:
Equivalently it can be rewritten as  the probability that the conjunction set (excursion set)
$$C_u=\{t\in S:\; X_i(t) \geq u, \, \forall i\in\overline{1,n}\}$$
is non-empty or that   the maximum of the smallest value among the fields exceeds $u$ 
\begin{equation}\label{conju}
\PP \left(\underset{t\in S}{\sup}  \ \underset{1\leq i \leq n}{\min} X_i(t) \geq u \right).
\end{equation}

When $n=1$, the expression (\ref{conju}) is simply the tail distribution of the maximum of a stationary Gaussian field.  Even in this simple case, finding the exact value of the tail distribution is very challenging \cite{MR2478201}. That explains why we  limit our attention to the asymptotic case $u\to  +\infty$.  This problem has been studied extensively in literature. One could mention three main techniques to deal with it: Double-sum method (see \citep{MR0250368,MR1361884}), Euler characteristic method (see \citep{MR2319516,MR2206344,MR2150192}) and Rice method (see \citep{MR1965978,MR2428714,MR2478201}). See also \citep{MR1207215,MR1910648}.

The first method was introduced by Pickands \cite{MR0250368} for stationary Gaussian  "$\alpha$ processes" and later  was  extended to non-stationary  processes  and to  non-Gaussian one by Piterbarg \cite{MR1361884}. Note that  the result depends on some non-explicit constants:  
 \textit{the Pickands constant} depending on the local  self-similarity  exponent $\alpha$  of  the  process. 
 
 The second method was provided by Adler and Taylor \cite{MR2319516} and concerns differentiable processes. It is an important tool  for studying  the geometry of random surfaces. The main idea of this method is as follows:  when the level $u$ is large, the excursion set $C_u$, if  non-empty, is, in most of the cases, a simply-connected domain. Therefore its Euler characteristic, denoted by $\mu_0(C_u)$, is  often equal to 1. Since  most of the values of $\mu_0(C_u)$ are  0 or 1, its expectation could be used as an approximation to the excursion probability.  Adler and Taylor gave that
\begin{equation}\label{EC1}
\E(\mu_0(C_u))=\sum_{i=0}^d \rho_i \mu_i(S),
\end{equation}
where $\rho_i$'s are the Euler characteristic densities defined as
$$\rho_0= \overline{\Phi}(u)=\int_u^{\infty} \frac{e^{-x^2/2}}{\sqrt{2\pi}}dx,$$
$$\rho_i=(2\pi)^{-(i+1)/2}H_{i-1}(u) e^{-u^2/2}=(2\pi)^{-i/2}H_{i-1}(u)\varphi(u), \forall i>0,$$
with $\displaystyle H_{j}(x)=(-1)^ne^{\frac{x^2}{2}} \frac{d^j}{dx^j}e^{\frac{-x^2}{2}}$ is the Hermite polynomial of degree $j$; and $\mu_i(S)$'s are the Minkowski functionals (or the Killing-Lipschitz curvatures) of $S$ (see \cite{MR2319516}). Note that $\mu_0(S) $ is the Euler characteristic of $S$, for example, it  is equal to number of connected components minus the number of holes inside when $ d=2$; and $\mu_d(S)$ is equal to $\lambda_d(S)$, the volume of $S$. 

The third method, the  Rice method,  is based on local maxima and  leads to the same approximation as in RHS of (\ref{EC1}). It gives also an upper bound. The first proof of validity is due to Piterbarg \cite{MR636766}. The expectation given in (\ref{EC1}) is proved to be a very accurate approximation when the domain $S$ is "nice" in the sense that it is a tamed and locally convex subset of $\R^d$ (see \cite[Theorem 14.3.3]{MR2319516}).  Note that in the case both apply,  the Euler Chacteristic method  gives  extra terms  with respect to the Double-sum method, and thus it is more accurate, see Azais and Mourareau  \cite{MR3949275}.
 
 In this paper, we are interested in the case $n\geq 2$. The motivation of this problem  comes from the statistical applications in neurology, for example, to determine whether the functional organization of the brain for language differs according to sex (see \cite{MR1747100}). In this application, $X_i(t)$ is the value of image $i$ at the location $t\in \R^d$ representing the intensity with respect to some actions. Here both the Double-sum method and Euler characteristic method are still useful. 
 
 By the Double-sum method, Debicki et al \citep{MR3178344,MR3385594} considered the one-dimensional processes and proved that
 $$\PP \left(\underset{t\in [0,T]}{\sup} \underset{1\leq i \leq n}{\min} X_i(t) >u \right)= H_{n,2}Tu\overline{\Phi}^n(u) (1+o(1)),$$
where $H_{n,2}$ is so-called the \textit{generalized Pickands constant} defined as
$$H_{n,2}=\underset{a\downarrow 0}{\lim} \frac{1}{a} \PP\left(\underset{k\geq 1}{\max} Z(ak) \leq 0\right),$$
with
$$Z(t)=\underset{1\leq i\leq n}{\min} \left( \sqrt{2}Y_i(t)-t^2+E_i\right),$$
here $Y_i$'s are independent copies of a centered Gaussian process $Y(t)$ with  covariance function $\textrm{Cov}(Y(t),Y(s))= |ts|,\; \forall t,s\geq 0$, and $E_i$'s are mutually independent unit mean exponential random variables being further independent of $Y_i$'s.  The expansion above must be understood, as in the rest of the paper,   as $u\to +\infty$.

Debicki et al  also considered non-stationary processes and mentioned that their result can be extended to  Gaussian fields but at the cost  of heavy notations. Note that their work is applied  to  a wider class of processes  than those considered here that are  smooth stationary ones.

By  the Euler characteristic method, Worsley and Friston \cite{MR1747100} considered the upper-triangular Toeplitz matrix $R$ defined as
\begin{equation}\label{matrix}
R= \begin{pmatrix}
\rho_0/b_0 & \rho_1/b_1 & \ldots & \rho_d/b_d\\
0 & \rho_0/b_0 & \ldots &\rho_{d-1}/b_{d-1}\\
\vdots &\vdots & \ddots&\vdots\\
0 & 0& \ldots & \rho_0/b_0 
\end{pmatrix},
\end{equation}

where $b_i=\Gamma((i+1)/2)/\Gamma(1/2)$ with  $\Gamma(.)$ the  Euler  gamma function, and the  $\rho_i$'s are the Euler characteristic densities as defined above.  They gave the heuristic argument that
\begin{equation}\label{ECH}
\PP(C_u\neq \emptyset)= \PP \left(\underset{t\in S}{\sup} \underset{1\leq i \leq n}{\min} X_i(t) \geq u \right) \approx \E (\mu_0(C_u))= (1,0,\ldots,0)R^n\mu(S),
\end{equation}
where $\mu(S)=(\mu_0(S)b_0,\mu_1(S)b_1,\ldots, \mu_d(S)b_d)$ is the column vector of the scaled  Minkowski functionals of $S$. However, to prove that $\E (\mu_0(C_u))$ is a good approximation is still an open question. For further discussion, see also \citep{MR2775212,MR2654766,MR1260300}.

Let us consider   the particular case of random processes (i.e. $d=1$). Then the matrix $R$ defined in (\ref{matrix}) becomes
$$R=\begin{pmatrix}
\rho_0 & \rho_1/b_1\\
0& \rho_0
\end{pmatrix}, \quad R^n=\begin{pmatrix}
\rho^n_0 & n\rho^{n-1}_0\rho_1/b_1\\
0& \rho^n_0
\end{pmatrix}.$$
Also note that in this case the domain $S$ is the interval $[0,T]$ with $\mu_0([0,T])=1$ and $\mu_1([0,T])=T$. Therefore, \textit{the validity of the Euler characteristic}, is equivalent to
$$\PP \left(\underset{t\in S}{\sup} \underset{1\leq i \leq n}{\min} X_i(t) \geq u \right) \approx \rho_0^n+n\rho_0^{n-1}\rho_1T=\overline{\Phi}^n(u)+n\overline{\Phi}^{n-1}(u)\varphi(u).T/\sqrt{2\pi}.$$
 That would give  and extra-term  with respect to  the  Double-sum method. And since
$$\overline{\Phi}(u)=\varphi(u)\left(\frac{1}{u}+o\left(\frac{1}{u}\right)\right),$$
 it could imply that
\begin{equation}\label{pick}
H_{n,2}=\frac{n}{\sqrt{2\pi}}.
\end{equation}
In a recent paper \cite{actata}, the equality in (\ref{pick}) has been proved to be true. The proof  exploits  the one-dimensional structure of the processes, and uses Rice formula to calculate the expected number of \textit{"up-crossing"}   of the level  $u$ while the other  processes are all greater than $u$. However, this idea seems hard to extend to  higher dimensions. 

In this paper, we consider the conjunction problem from another point of view. Our approach relies on a result of Azais and Wschebor \cite{MR3245991} describing the geometry of the excursion set. There they established a relation  between   the tail distribution of the maximum and the volume and the perimeter 
 of the index set.  In \cite{MR3473099}, this idea has been used to provide the asymptotic formula of the tail of the maximum corresponding to the coefficients of the volume of the $\epsilon$-neighborhood of non-locally convex index set. With the same spirit, we will give a  one-term asymptotic formula for the conjunction probability where the coefficient comes from the local geometry (or local volume) of the conjunction set (see Proposition \ref{volu}).  

Before stating the main result of this paper, we state  the technical assumptions on the considered  fields.

 \textbf{Assumption $A$: }Assume $\mathcal{X}$ be a random field defined on a ball $B\subset \R^d$ containing the domain $S$ such that $\mathcal{X}$ satisfies:
\begin{itemize}
\item[i.] $S$ is a stratified compact manifold (see \cite{MR2319516} for more detail) satisfying that it is the closure of its interior, and its boundary is the union of a finite number of $\mathcal{C}^2$ and $d-1$ dimensional compact domains.
\item[ii.] $\mathcal{X}$ is a stationary centered Gaussian field with unit variance and $\textnormal{Var}(X'(t))$ is the identity matrix.
\item[iii.] Almost surely the paths of $X(t)$ are of class $\mathcal{C}^3$.
\item[iv.] For all $s\neq t \in B$, the distribution of $(X(s),X(t),X'(s),X'(t))$ does not degenerate.
\item[v.] For all $t\in B$ and , $ \gamma$ in the unit sphere $\mathcal{S}^{d-1}$, the distribution of $(X(t),X'(t),X''(t)\gamma)$ does not degenerate.
\end{itemize}

Our main result is the following.

\begin{theorem}  \label{t:main}
Let $X_i(t), 1\leq i \leq n,$ be independent copies of a  Gaussian field $X$  satisfying Assumption (A). Then as $u$ tends to infinity,
\begin{align}
\displaystyle \PP \left(\underset{t\in S}{\max} \underset{1\leq i \leq n}{\min} X_i(t) \geq u \right)=&\displaystyle u^{d-n}\varphi^n(u)\left[ \frac{\lambda_d(S)}{(2\pi)^{d/2}}  \sum_{k_n=0}^d  \sum_{k_{n-1}=d-k_n}^d \ldots \sum_{k_{2}=(n-2)d-(k_n+k_{n-1}+\ldots+k_3)}^d\right. \notag 
\end{align}
\begin{equation} \label{mafomu}
\displaystyle \left. \frac{\omega_d}{\omega_{\sum_{i=2}^n k_i -(n-2)d} \prod_{i=2}^{n} \omega_{d-k_i}} \times \frac{d!}{ \left[ \sum_{i=2}^n k_i -(n-2)d\right]! .\prod_{i=2}^{n} (d-k_i)!} +o(1) \right],
\end{equation}

where $\omega_k$ stands for the volume of a $k$-dimensional unit ball.
\end{theorem}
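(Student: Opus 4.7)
I would use a component-counting argument in the spirit of Aza\"is and Wschebor \cite{MR3245991}. For large $u$, the conjunction set $C_u$ should, with overwhelming probability, be either empty or a single small connected region concentrated near a local maximum of the min-field $\min_{1\leq i \leq n} X_i$. If $N(C_u)$ denotes its number of connected components, this should give
\begin{equation*}
\PP(C_u \neq \emptyset) \;\sim\; \E[N(C_u)] \;\sim\; \frac{\E[\lambda_d(C_u)]}{\E[V \mid C_u \neq \emptyset]},
\end{equation*}
where $V$ is the volume of a typical component. The theorem thus reduces to two computations: an expected total volume and a typical local volume.

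\textbf{The two inputs.} The total volume follows immediately from Fubini and independence:
\begin{equation*}
\E[\lambda_d(C_u)] \;=\; \lambda_d(S)\,\overline{\Phi}^n(u) \;\sim\; \lambda_d(S)\,(2\pi)^{-n/2}\,u^{-n}\,\varphi^n(u),
\end{equation*}
using $\overline{\Phi}(u) \sim \varphi(u)/u$. The local volume is supplied by Proposition \ref{volu}, whose leading behaviour scales as $u^{-d}$ with an explicit coefficient. Granting that proposition, the combinatorial sum in (\ref{mafomu}) has a transparent geometric reading: at a candidate peak point the gradients $\nabla X_1,\ldots,\nabla X_n$ induce an orthogonal splitting of $\R^d$ into subspaces of dimensions $m, d-k_2,\ldots,d-k_n$ with $m:=\sum_{i=2}^n k_i-(n-2)d\in[0,d]$, and the factor
\begin{equation*}
\frac{\omega_d\, d!}{\omega_m\, m! \,\prod_{i=2}^n \omega_{d-k_i}\,(d-k_i)!}
\end{equation*}
is the integral-geometric weight that appears when one integrates the local volume of an intersection of $n$ half-spaces over these configurations. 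A useful sanity check is $n=1$: the sum collapses to $1$ and the formula reduces to $(2\pi)^{-d/2}\,u^{d-1}\,\varphi(u)\,\lambda_d(S)$, which is precisely the top-dimensional term $\rho_d\,\mu_d(S)$ of the Euler-characteristic expansion (\ref{EC1}).

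\textbf{Combining and main obstacle.} Substituting the two inputs in the display of the first paragraph produces the right-hand side of (\ref{mafomu}); the overall normalisation $(2\pi)^{-d/2}$ arises from combining $(2\pi)^{-n/2}$ from $\overline{\Phi}^n$ with the $(2\pi)^{-(d-n)/2}$ carried by the local-volume coefficient. The principal obstacle is to justify the two asymptotic equivalences in that display. Proving $\PP(C_u\neq\emptyset)\sim\E[N(C_u)]$ requires a Bonferroni-type bound showing that two or more components contribute at strictly lower order; this reduces to joint density bounds on $(X_i(s),X_i(t))$ for $s\neq t$ via Assumption A(iv). Proving $\E[N(C_u)]\sim\E[\lambda_d(C_u)]/\E[V\mid C_u\neq\emptyset]$ requires uniform control over the conditional distribution of the local volume around a peak, which is where the Aza\"is--Wschebor shape theorem is applied to the min-field, and where Assumption A(v) ensures non-degeneracy of the Hessian. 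Finally, boundary effects (components touching $\partial S$) must be shown to be of lower order, using the $\mathcal{C}^2$ regularity of $\partial S$ from Assumption A(i). Proposition \ref{volu}, which performs the local volume calculation itself, is the deepest technical ingredient; in this plan I take it as a given input.
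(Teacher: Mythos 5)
Your high-level picture (a single small component near a peak, controlled by first and second moments) matches the spirit of the paper, but the way you propose to assemble the constant does not work. The chain $\PP(C_u\neq\emptyset)\sim \E[N(C_u)]\sim \E[\lambda_d(C_u)]/\E[V\mid C_u\neq\emptyset]$ is circular: once there is at most one component, $\E[V\mid C_u\neq\emptyset]$ \emph{is} $\E[\lambda_d(C_u)]/\PP(C_u\neq\emptyset)$ by definition, so the display carries no information unless the conditional mean volume is computed by an independent route. You assign that role to Proposition \ref{volu}, but that proposition does not compute the volume of a component of $C_u$ (a subset of $\R^d$, of order $u^{-d}$); it computes the $(n-1)d$-dimensional Lebesgue measure, in the configuration space of the centres $(t_2,\ldots,t_n)$, of the set where the balls $B(t_i,r_i)$ have a common point --- a homogeneous polynomial of degree $(n-1)d$ in the radii, hence of order $u^{-(n-1)d}$ when $r_i\asymp 1/u$, not $u^{-d}$. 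These are different objects, and the factor $(2\pi)^{-(d-n)/2}$ you say the local-volume coefficient ``carries'' appears nowhere in Proposition \ref{volu} (which involves only the $\omega_k$ and factorials); it is reverse-engineered from the answer.

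A second, related problem is that the Aza\"{\i}s--Wschebor shape theorem cannot be ``applied to the min-field'': $\min_i X_i$ is neither Gaussian nor smooth. The paper's route (Proposition \ref{maint}) applies the shape lemma to each $X_i$ separately, so that with high probability each excursion set $K_{u,i}$ is a single component squeezed between balls $B(t_i,\underline{r_i})\subset K_{u,i}\subset B(t_i,\overline{r_i})$ around the unique local maximum $t_i$ of $X_i$; the conjunction event then becomes the event that $S$ and the $n$ balls meet. That event is estimated by a Rice formula for the vector field $(X_1'(t_1),\ldots,X_n'(t_n))$ on the product $B^{\otimes n}$: the indicator $\mathbb{I}_{\{\cap_i B(t_i,r_i)\neq\emptyset\}}$ is integrated over $(t_2,\ldots,t_n)$, which is exactly where the configuration-space volume of Proposition \ref{volu} enters (with the radii frozen by conditioning on $X_i(t_i)=u_i$, the $u_i$-integrals producing the factors $2^{m_i/2}\Gamma(1+m_i/2)$ and the Gaussian density of the gradients producing $(2\pi)^{-nd/2}$); the matching lower bound is a second-moment estimate in which $\E(M(M-1))$ is shown to be exponentially negligible. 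Without this Rice-formula step on the product space, your two inputs cannot be combined into the stated constant.
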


The proof of the main theorem consists of two propositions  presented in Section 2.  We prove these propositions and consider some special examples in Sections 3 and 4. In Section 5, we compare our result with the corresponding term in the prediction given by Euler characteristic method. Although the  two formulas seem to be different but their values coincide. Therefore in some sense, our result confirms partially the validity of Euler characteristic method.

Throughout this paper, we will use the following  notation.

\begin{itemize}
\item[-] $\lambda_k(.)$ stands for the usual $k$-dimensional Lebesgue measure.
\item[-] $B(t,r)$ stands for the ball of radius $r$ at center $t$.

\item[-] For a $n$-dimensional vector $m=(m_1,\ldots,m_n)$ and a $n$-tuple of non-negative integers $r=(r_1,\ldots,r_n)$, the notations $m^r$ stands for
$$m^r=m_1^{r_1}m_2^{r_2} \ldots m_n^{r_n}.$$
\item[-] For a given set $S \subset \R^d$ and a positive constant $\epsilon$, the $\epsilon$- neighborhood of $S$,  denoted by $S^{+\epsilon}$, is defined as
$$S^{+\epsilon}=\{t\in \R^d:\; \mbox{dist}(t,S)\leq \epsilon\}.$$
\item[-] For a given set $S \subset \R^d$ and a small enough positive constant $\epsilon$, the set $S^{-\epsilon}$, is defined as
$$S^{-\epsilon}=\{t\in \R^d:\; B(t,\epsilon) \subset S\}.$$
\item[-] $\omega_k$ is the volume of a $k$-dimensional unit ball.
\item[-] $\|m\|$ is the $l_1$ norm of a vector. 
\end{itemize}
\section{Proof of the main theorem} 
The main result can be easily deduced from following two propositions.
\begin{proposition}\label{maint} Let $X_i(t), 1\leq i \leq n,$ be n  independent copies of a  Gaussian field $X$  satisfying Assumption (A). Assume that for a fixed point $t_1$ and small enough  $r_1,r_2,\ldots,r_n$, there exist  constants $k>0$ and $C_m$ such that
\begin{equation}\label{asymp2}
\lambda_{(n-1)d} \left((t_2,\ldots,t_n):\; \underset{1\leq i \leq n}{\cap} B(t_i,r_i) \neq \emptyset \right)=\sum_{\|m\|=k}C_mr^m.
\end{equation} 

Then, as $u$ tends to infinity,
$$\PP \left(\underset{t\in S}{\max} \underset{1\leq i \leq n}{\min} X_i(t) \geq u \right)=u^{nd-n-k}\varphi^n(u)\left(\frac{2^{k/2}\lambda_d(S)}{(2\pi)^{nd/2}}\sum_{\|m\|=k}C_m  \prod_{i=1}^n  \Gamma(1+m_i/2)  +o(1)\right).$$
\end{proposition}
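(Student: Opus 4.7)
The plan is to exploit the Aza\"\i s--Wschebor description of the excursion set of a smooth stationary Gaussian field at high levels. Above a threshold $u$, with overwhelming probability each connected component of $\{X_i\ge u\}$ is a quasi-ball centred at a local maximum $t_i^*$ of $X_i$ of height $v_i\ge u$ with radius $r_i=\sqrt{2(v_i-u)/u}$; this scaling comes from the second-order Taylor expansion at $t_i^*$ combined with the asymptotic concentration $X_i''(t_i^*)\approx -u\,I_d$ available for a stationary field with $\operatorname{Var}(X_i'(t))=I_d$. Since $C_u\ne\emptyset$ precisely when one can select one such component from each of the $n$ fields with non-empty common intersection, I would reduce the conjunction probability to a first-moment count of intersecting $n$-tuples of local maxima.

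Let $N$ denote the number of $n$-tuples $((t_1^*,v_1),\ldots,(t_n^*,v_n))$ where $t_i^*$ is a local maximum of $X_i$ in $S$ at height $v_i\ge u$ and $\bigcap_{i=1}^{n} B(t_i^*,r_i)\ne\emptyset$. Modulo the Aza\"\i s--Wschebor shape error, $\{C_u\ne\emptyset\}=\{N\ge 1\}$ up to a set of negligible probability, and a standard Bonferroni argument gives $\PP(N\ge 1)=\E[N](1+o(1))$ provided $\E[N(N-1)]=o(\E[N])$. Independence of the $X_i$ and the Kac--Rice formula then yield
\begin{align*}
\E[N]=&\int_{S}dt_1\int_{\R^{(n-1)d}}dt_2\cdots dt_n\int_{u}^{\infty}\!\!\!\cdots\!\int_{u}^{\infty}\\
&\qquad\times\prod_{i=1}^{n}q(v_i,t_i)\,\mathbf{1}\!\left[\bigcap_{i}B(t_i,r_i)\ne\emptyset\right]dv_1\cdots dv_n,
\end{align*}
where $q(v,t)$ is the Kac--Rice density of local maxima of $X$ at height $v$, which under Assumption~(A) satisfies $q(v,t)=v^d\varphi(v)/(2\pi)^{d/2}\cdot(1+o(1))$ as $v\to\infty$. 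The change of variables $v_i=u+w_i/u$ produces $\varphi(v_i)=\varphi(u)e^{-w_i}(1+o(1))$, $dv_i=u^{-1}dw_i$, $v_i^d=u^d(1+o(1))$ and $r_i=u^{-1}\sqrt{2w_i}$. Applying hypothesis~(\ref{asymp2}) to the inner $(n-1)d$-dimensional Lebesgue integral gives $2^{k/2}u^{-k}\sum_{\|m\|=k}C_m\,w^{m/2}$, and the remaining integrals $\int_0^\infty e^{-w_i}w_i^{m_i/2}dw_i=\Gamma(1+m_i/2)$ combine with the factor $\lambda_d(S)$ from the $t_1$-integration to deliver exactly the announced formula.

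The main obstacle is not this bookkeeping but two technical estimates feeding into it. First, identifying the excursion component around a local maximum at height $v_i$ with the exact ball $B(t_i^*,r_i)$ must be made rigorous uniformly in $v_i\in[u,u+O(u^{-1}\log u)]$; this is exactly where the Aza\"\i s--Wschebor shape result is used, and hypotheses~(iii)--(v) of Assumption~(A) ($\mathcal{C}^3$ sample paths and non-degeneracy of the joint law of $(X,X',X''\gamma)$) are what ensure $X_i''(t_i^*)=-uI_d+o(u)$ with enough uniformity that the true component differs from the ball by a fraction $o(1)$ of its radius. Second, the bound $\E[N(N-1)]=o(\E[N])$ is obtained from two-point Kac--Rice densities; the non-degeneracy condition~(iv) is precisely what controls the probability that any single $X_i$ has two nearby local maxima both exceeding height $u$. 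Boundary corrections from integrating $t_2,\ldots,t_n$ over $\R^{(n-1)d}$ rather than $S^{n-1}$ are negligible since the excursion balls have radius $O(u^{-1/2})$ and the resulting boundary layer has $d$-volume $o(\lambda_d(S))$.
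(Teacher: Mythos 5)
Your proposal is correct and follows essentially the same route as the paper: the Aza\"\i s--Wschebor shape lemma to replace each excursion component by a ball of radius $\asymp\sqrt{2(v_i-u)}/u$ around the unique high local maximum, a Kac--Rice first-moment count of $n$-tuples of local maxima with intersecting balls (factorized by independence, with the density $v^d\varphi(v)/(2\pi)^{d/2}(1+o(1))$ and the change of variables $v_i=u+w_i/u$ producing the $\Gamma(1+m_i/2)$ factors), and a Bonferroni/second-factorial-moment bound to match the lower bound. The only cosmetic differences from the paper are that it truncates the height window to $[u,u+1]$ and keeps the two-sided radii $\underline{r_i}\le\overline{r_i}$ with $u^{\alpha}$ corrections rather than the single limiting radius, neither of which changes the argument.
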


\begin{proposition} \label{volu}
 For  $t_1$,  fixed point in the parameter  space and  for 
 $r_1,r_2,\ldots,r_n >0$ small enough, we have:
 \begin{align}
 &\displaystyle \lambda_{(n-1)d} \left((t_2,\ldots,t_n) \in \R^{d (n-1)} :\; \underset{1\leq i \leq n}{\cap} B(t_i,r_i) \neq \emptyset \right) \notag\\
 =& \displaystyle   \sum_{k_n=0}^d  \sum_{k_{n-1}=d-k_n}^d \ldots \sum_{k_{2}=(n-2)d-(k_n+k_{n-1}+\ldots+k_3)}^d r_1^{(n-1)d-\sum_{i=2}^n k_i} \times \notag \\
 &  \displaystyle  \prod_{i=2}^n \left( r_i^{k_i}\omega_{k_i} \right) \frac{\omega_d \omega_{(n-1)d-\sum_{i=2}^n k_i}}{\omega_{\sum_{i=2}^n k_i -(n-2)d} \prod_{i=2}^{n} \omega_{d-k_i}} \times \frac{d!}{ \left[ \sum_{i=2}^n k_i -(n-2)d\right]! .\prod_{i=2}^{n} (d-k_i)!}. \label{finalvo}
 \end{align}
\end{proposition}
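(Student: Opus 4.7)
Translate so that $t_1=0$. A point $(t_2,\ldots,t_n)$ belongs to the set of interest exactly when there exists $v\in\R^d$ with $|v|\le r_1$ and $|t_i-v|\le r_i$ for every $i\ge 2$; writing $u_i=t_i-v$ exhibits it as the Minkowski sum
\[
A \;=\; r_1\widetilde D\oplus r_2\widetilde P_2\oplus\cdots\oplus r_n\widetilde P_n \;\subset\;\R^{(n-1)d},
\]
where $\widetilde D=\{(v,v,\ldots,v):|v|\le 1\}$ is the closed unit $d$-ball embedded along the diagonal and $\widetilde P_i$ is the closed unit $d$-ball sitting in the $i$-th coordinate block ($i=2,\ldots,n$). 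Each of these bodies is $d$-dimensional.

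Applying Minkowski's polynomial theorem then yields
\[
\lambda_{(n-1)d}(A)\;=\;\sum_{\substack{j_1+\cdots+j_n=(n-1)d\\ 0\le j_i\le d}}\binom{(n-1)d}{j_1,\ldots,j_n}\,V\!\bigl(\widetilde D[j_1],\widetilde P_2[j_2],\ldots,\widetilde P_n[j_n]\bigr)\prod_{i=1}^n r_i^{j_i},
\]
the bound $j_i\le d$ being automatic because mixed volumes with more copies of a body than its dimension vanish. The reindexing $k_i=j_i$ for $i\ge 2$ and $j_1=(n-1)d-\sum_{i\ge 2}k_i$ puts the summation range into precisely the nested form of~\eqref{finalvo}, so matching coefficients reduces the proposition to the explicit mixed-volume identity
\[
V\bigl(\widetilde D[j_1],\widetilde P_2[j_2],\ldots,\widetilde P_n[j_n]\bigr) \;=\; \binom{(n-1)d}{j_1,\ldots,j_n}^{-1}\omega_d\prod_{i=1}^n\frac{\omega_{j_i}}{\omega_{d-j_i}}\cdot\frac{d!}{\prod_{i=1}^n(d-j_i)!}.
\]

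My plan to establish this identity is a direct geometric decomposition. Since $\sum_i(d-j_i)=d$, there are exactly $d!/\prod(d-j_i)!$ partitions $\{1,\ldots,d\}=J_1\sqcup\cdots\sqcup J_n$ with $|J_i|=d-j_i$, which accounts for the multinomial factor in~\eqref{finalvo}. On each such partition, the coordinates indexed by $J_i$ are ``absorbed'' by the body $\widetilde P_i$ (or $\widetilde D$ when $i=1$), and a change of variables reduces the mixed-volume integral on this stratum to a product of $d$-ball volumes of the appropriate dimensions, contributing the residual factor $\omega_d\prod_i(\omega_{j_i}/\omega_{d-j_i})$. The principal obstacle is that the diagonal subspace $\Delta=\mathrm{span}(\widetilde D)$ is not orthogonal to the block subspaces $\Sigma_i=\mathrm{span}(\widetilde P_i)$---each $\Sigma_i$ carries a distorted copy of every direction of $\Delta$---so the classical direct-sum formula for mixed volumes of bodies in transverse subspaces does not apply, and the interlocking of $\widetilde D$ with the $\widetilde P_i$'s must be tracked carefully. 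A cleaner alternative route is induction on $n$ via Steiner's tube formula: integrating out $t_n$ through
\[
\int_{\R^d}\mathbf{1}\!\bigl[\textstyle\bigcap_{i=1}^n B(t_i,r_i)\neq\emptyset\bigr]\,dt_n \;=\; \sum_{k=0}^d\omega_{d-k}\,r_n^{d-k}\,V_k\bigl(\textstyle\bigcap_{i=1}^{n-1}B(t_i,r_i)\bigr)
\]
reduces the problem to the explicit evaluation of $\int V_k(\cap_{i=1}^{n-1}B(t_i,r_i))\,dt_2\cdots dt_{n-1}$ for \emph{every} $k\in\{0,\ldots,d\}$---a strengthened inductive statement that can be pushed through by the same partition combinatorics, treating one intrinsic volume at a time, until it collapses to~\eqref{finalvo}.
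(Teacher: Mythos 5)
Your Minkowski-sum reformulation is correct and genuinely different from the paper's starting point: after translating $t_1$ to the origin, the set in question is indeed $r_1\widetilde D\oplus r_2\widetilde P_2\oplus\cdots\oplus r_n\widetilde P_n$, Minkowski's polynomial theorem applies, the bound $j_i\le d$ does follow from the vanishing of mixed volumes with too many copies of a $d$-dimensional body, and the coefficient identity you isolate is exactly equivalent to (\ref{finalvo}). But the argument stops precisely where the real work begins. The mixed-volume identity is asserted, not proved, and you yourself name the obstruction: the diagonal subspace is not orthogonal (or even transverse in the needed sense) to the coordinate blocks, so no product or direct-sum formula for mixed volumes applies. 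The ``partition combinatorics'' is a heuristic for where the factor $d!/\prod_i(d-j_i)!$ ought to come from, not a derivation; nothing in the proposal actually produces the constants $\omega_d\prod_i\omega_{j_i}/\omega_{d-j_i}$, and an incorrect guess here would be invisible to every check you have performed. This is a genuine gap, not a routine verification.

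Your fallback route converges to the paper's proof but omits its essential tool. Integrating out $t_n$ by the Steiner/Weyl tube formula does reduce the problem to evaluating $\int V_k\bigl(\bigcap_{i\le n-1}B(t_i,r_i)\bigr)\,dt_2\cdots dt_{n-1}$ for every $k$, but you give no mechanism for computing the translation integral of an intrinsic volume of an intersection of balls, which is the whole difficulty. The paper closes this loop with the Crofton formula: $\mu_{d-k_n}$ of the intersection is written as an integral over the affine Grassmannian of the Euler characteristic of slices $\bigcap_i B(t_i,r_i)\cap V^*$, Fubini moves the next $dt_i$ integration inside the slice, the tube formula is applied within $V^*$, and Crofton is applied again in reverse; the flag coefficients $\bigl[\begin{smallmatrix}m\\ n\end{smallmatrix}\bigr]=\frac{\omega_m}{\omega_n\omega_{m-n}}\binom{m}{n}$ that this produces are exactly what telescope into the factor $\frac{\omega_d\,\omega_{(n-1)d-\sum k_i}}{\omega_{\sum k_i-(n-2)d}\prod\omega_{d-k_i}}\cdot\frac{d!}{[\sum k_i-(n-2)d]!\prod(d-k_i)!}$. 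Without Crofton (or an equivalent kinematic formula) the ``strengthened inductive statement'' does not close. In short: you have a correct and elegant reduction plus a plausible plan, but the decisive computation is missing on both of your proposed routes.
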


Indeed, the explicit values of $k=(n-1)d$ and $C_m$'s are provided in Proposition \ref{volu}. Then substituting them in Proposition \ref{maint}, we get the asymptotic formula for the conjunction probability as 
\begin{align*}
&\displaystyle \PP \left(\underset{t\in S}{\max} \underset{1\leq i \leq n}{\min} X_i(t) \geq u \right)\\
=&\displaystyle u^{d-n}\varphi^n(u)\left[ \frac{2^{(n-1)d/2}\lambda_d(S)}{(2\pi)^{nd/2}}  \sum_{k_n=0}^d  \sum_{k_{n-1}=d-k_n}^d \ldots \sum_{k_{2}=(n-2)d-(k_n+k_{n-1}+\ldots+k_3)}^d\right. \\
&\displaystyle  \Gamma \left( \frac{(n-1)d-\sum_{i=2}^n k_i}{2}+1\right) \omega_{(n-1)d-\sum_{i=2}^n k_i} \prod_{i=2}^n \Gamma \left( \frac{k_i}{2}+1\right) \omega_{k_i} \times\\
&\displaystyle \left. \frac{\omega_d}{\omega_{\sum_{i=2}^n k_i -(n-2)d} \prod_{i=2}^{n} \omega_{d-k_i}} \times \frac{d!}{ \left[ \sum_{i=2}^n k_i -(n-2)d\right]! .\prod_{i=2}^{n} (d-k_i)!} +o(1) \right].
\end{align*}

Using the fact that $\displaystyle \omega_k=\frac{\pi^{k/2}}{\Gamma(1+k/2)},$ the proof is completed.
 \section{Proof of Proposition \ref{maint}} 
 In the proof of Proposition \ref{maint}, we need  following two lemmas. The first lemma describes the role of (\ref{asymp2}). 
 \begin{lemma} Assume that there exist constants $k$ and $C_m$'s such that the equality in (\ref{asymp2}) holds. Then for  small enough  $r_1,r_2,\ldots,r_n,$ we have
 
 \begin{equation}\label{asymp}
\lambda_{nd} \left((t_1,\ldots,t_n):\; t_1 \in S^{+r_1},  \,  \underset{1\leq i \leq n}{\cap} B(t_i,r_i) \neq \emptyset \right)\leq \lambda_d (S)\sum_{\|m\|=k}C_mr^m +O(\sum_{\|m\|=k+1}r^m),
\end{equation} 
and
\begin{equation}\label{asymp1}
\lambda_{nd} \left((t_1,\ldots,t_n):\; t_1 \in S^{-r_1}, \forall i \, ; \,  \underset{1\leq i \leq n}{\cap} B(t_i,r_i) \neq \emptyset \right)\geq \lambda_d(S) \sum_{\|m\|=k}C_mr^m +O(\sum_{\|m\|=k+1}r^m).
\end{equation}
 \end{lemma}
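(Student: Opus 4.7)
The plan is to integrate out the $t_1$ variable via Fubini, use the translation invariance of Lebesgue measure to apply the hypothesis (\ref{asymp2}) to the inner integral, and then absorb the resulting tubular-neighborhood error by means of the piecewise $\mathcal{C}^2$ regularity of $\partial S$ provided in Assumption A(i).

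First I would write, for the upper bound,
\begin{equation*}
\lambda_{nd}\bigl((t_1,\ldots,t_n):\ t_1 \in S^{+r_1},\ {\textstyle\bigcap_{i=1}^n} B(t_i,r_i) \neq \emptyset\bigr) = \int_{S^{+r_1}} F(t_1)\, dt_1,
\end{equation*}
where $F(t_1) := \lambda_{(n-1)d}\bigl((t_2,\ldots,t_n):\ \bigcap_{i=1}^n B(t_i,r_i) \neq \emptyset\bigr)$. Under the measure-preserving change of variables $s_i = t_i - t_1$ for $i = 2,\ldots,n$, the constraint becomes $B(0,r_1) \cap \bigcap_{i=2}^n B(s_i,r_i) \neq \emptyset$, which does not involve $t_1$. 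Hence $F$ is constant in $t_1$, and by hypothesis (\ref{asymp2}) it equals $\sum_{\|m\|=k} C_m r^m$; consequently the integral reduces to $\lambda_d(S^{+r_1}) \cdot \sum_{\|m\|=k} C_m r^m$.

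Next I would invoke the Steiner-type estimate $\lambda_d(S^{+r_1}) \leq \lambda_d(S) + c\, r_1$, valid for small $r_1$, which is immediate from A(i): the boundary $\partial S$ is a finite union of $\mathcal{C}^2$ compact $(d-1)$-dimensional manifolds and hence has a finite $(d-1)$-dimensional Minkowski content. Multiplying through, the extra term $c\, r_1 \sum_{\|m\|=k} C_m r^m$ consists of monomials $r^{m+e_1}$ of total degree $k+1$, which fits into $O(\sum_{\|m\|=k+1} r^m)$ and yields (\ref{asymp}). The lower bound (\ref{asymp1}) is obtained by repeating the same argument with $S^{-r_1}$ in place of $S^{+r_1}$; the matching inner estimate $\lambda_d(S^{-r_1}) \geq \lambda_d(S) - c\, r_1$ again follows from the piecewise $\mathcal{C}^2$ boundary together with the fact (from A(i)) that $S$ is the closure of its interior, so that $S^{-r_1}$ approaches $S$ in measure as $r_1 \to 0$.

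The only substantive ingredient is therefore the two-sided Steiner bound $\lambda_d(S^{\pm r_1}) = \lambda_d(S) \pm O(r_1)$ under A(i); everything else is a single Fubini swap and a translation. No analysis of the geometric shape of $\bigcap_i B(t_i,r_i)$ is required at this stage, since that content has already been packaged into the hypothesis (\ref{asymp2}) and will later be made explicit by Proposition \ref{volu}.
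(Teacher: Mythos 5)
Your proposal is correct and follows essentially the same route as the paper: factor the $nd$-dimensional volume via Fubini into $\lambda_d(S^{\pm r_1})$ times the inner volume from (\ref{asymp2}), then control $\lambda_d(S^{\pm r_1})-\lambda_d(S)$ by an $O(r_1)$ tube estimate on the piecewise $\mathcal{C}^2$ boundary, so that the correction is a degree-$(k+1)$ term. The only (welcome) additions are that you make explicit the translation invariance justifying that the inner volume does not depend on $t_1$, and that you spell out why $r_1\cdot O(r^m)$ with $\|m\|=k$ lands in $O(\sum_{\|m\|=k+1}r^m)$.
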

 
 \begin{proof} It is clear that
 \begin{align*}
 & \lambda_{nd} \left((t_1,\ldots,t_n):\; t_1 \in S^{+r_1},  \,  \underset{1\leq i \leq n}{\cap} B(t_i,r_i) \neq \emptyset \right) \\ 
 =& \displaystyle \int \mathbb{I}_{\{t_1 \in S^{+r_1}\}} dt_1 \int \mathbb{I}_{\{\underset{1\leq i \leq n}{\cap} B(t_i,r_i) \neq \emptyset\}} dt_2\ldots dt_n  = \displaystyle \lambda_d(S^{+r_1}) \sum_{\|m\|=k}C_mr^m.
 \end{align*} 
 Then (\ref{asymp}) follows from the facts that 
 $$\lambda_d(S^{+r_1}) \leq \lambda_d(S) + \lambda_d(\partial S^{+r_1}), $$
 where $\partial S$ stands for the boundary of $S$ that is the union of a finite number of $\mathcal{C}^2$ and $d-1$ dimensional compact domains $T_j$'s; and for each domain $T_j$, one has the tube formula that (see \cite{MR2024928})
 $$\lambda_d(T_j^{+r_1})=O(r_1).$$
 
 Similarly, to prove (\ref{asymp1}), we have
 \begin{align*}
 & \lambda_{nd} \left((t_1,\ldots,t_n):\; t_1 \in S^{-r_1},  \,  \underset{1\leq i \leq n}{\cap} B(t_i,r_i) \neq \emptyset \right) \\ 
 =& \displaystyle \lambda_d(S^{-r_1}) \sum_{\|m\|=k}C_mr^m  \geq  \left(\lambda_d(S)- \lambda_d(\partial S^{+r_1})\right) \sum_{\|m\|=k}C_mr^m,  
 \end{align*} 
 and the proof follows easily.
 \end{proof}

The second lemma is due to Azais and Wschebor \cite{MR3245991}.
\begin{lemma}\label{leaz}
Let $X$ be a random field  satisfying Assumption (A) and $\alpha$ be a  real number,  $0<\alpha <1$. Then the  following event  
occurs with high probability, in the sense that there exist two constants $C,c>1$ such that its probability is at least equal to $1-Ce^{-cu^2/2}$.  The event is described by :

\begin{itemize}
\item[+] The field has only one local maximum point $t_0\in \overset{\circ}{B}$ with value $X(t_0) \in [u,u+1]$,
\item[+] and  the  excursion set 
$$K_{u} :=\{s\in B: \, X(s) \geq u\}$$
consists of only one connected component, and moreover, 
$$ B(t_0,\underline{r})  \subset K_{u}  \subset B(t_0,\overline{r}),$$
where $\displaystyle    \underline{r}=\sqrt{2\frac{X(t_0)-u}{X(t_0)+u^{\alpha}}}$ and $\displaystyle    \overline{r}=\sqrt{2\frac{X(t_0)-u}{X(t_0)-u^{\alpha}}}$.
\end{itemize}

\end{lemma}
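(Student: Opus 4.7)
The plan is to recognize this as essentially Theorem 1 of Aza\"\i s and Wschebor \cite{MR3245991} and to reconstruct the underlying mechanism. The argument I would follow splits into two essentially independent pieces: a \emph{shape} estimate pinching the excursion set between the two concentric balls, and a \emph{localization} estimate that forces uniqueness of the high local maximum and of the connected component of $K_u$. The $\alpha$-dependent probability bound is obtained by trading a Hessian concentration inequality against a pair-count upper bound.

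For the shape estimate, I would fix a local maximum $t_0$ with $X(t_0)\in[u,u+1]$ and write the third-order Taylor expansion
$$X(t_0+s)=X(t_0)+\tfrac12\, s^\top X''(t_0)\,s + R(s),\qquad |R(s)|\le \tfrac16\sup_B\|X'''\|\cdot\|s\|^3.$$
Under Assumption (A), stationarity and $\mathrm{Var}(X'(t))=I$ force $\mathrm{Cov}(X(t),X''_{ij}(t))=-\delta_{ij}$, so $W:=X''(t_0)+X(t_0)\,I$ is a centered Gaussian matrix with bounded covariance and independent of $X(t_0)$. A Gaussian tail bound on the operator norm then places every eigenvalue of $-X''(t_0)$ in the bracket $[X(t_0)-u^\alpha,\,X(t_0)+u^\alpha]$ with probability $1-O(\exp(-c_0 u^{2\alpha}))$. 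Since the candidate radii are $O(u^{-1/2})$ and $R(s)=O(\|s\|^3)=O(u^{-3/2})$ is dominated by the Hessian uncertainty term $u^\alpha\|s\|^2=O(u^{\alpha-1})$, plugging the eigenvalue bracket into the quadratic and solving $X(s)\geq u$ yields exactly the inclusions $B(t_0,\underline r)\subset K_u\subset B(t_0,\overline r)$.

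For the localization estimate, I would deploy a second-moment argument via the Kac--Rice formula. The expected number of ordered pairs of local maxima of $X$ in $B$ whose values both exceed $u$ factorizes away from the diagonal at leading order and contributes $O(\mathrm{poly}(u)\cdot\varphi^2(u))=O(\exp(-u^2)\,\mathrm{poly}(u))$, which for large $u$ is $O(\exp(-cu^2/2))$ with any $c<2$ and in particular bounds the probability of having two or more high local maxima. A second connected component of $K_u$ would necessarily harbour a second local maximum above $u$, so the same estimate controls the single-component claim. Combining with the shape estimate and the vacuously favourable case of no local maximum in $[u,u+1]$ yields the overall lower bound $1-C\exp(-cu^2/2)$.

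The hard part will be the Kac--Rice pair integrand in a neighbourhood of the diagonal, where the joint conditioning $X'(t)=X'(s)=0$ becomes degenerate; hypotheses (iv) and (v) in Assumption (A) are indispensable there, together with a careful analysis of the conditional law of $(X(t),X''(t))$ given $X'(t)=0$. The shape estimate is, by contrast, a mechanical Taylor expansion once the Hessian concentration is in hand.
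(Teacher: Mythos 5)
The paper does not actually prove this lemma: it is imported verbatim from Aza\"\i s and Wschebor \cite{MR3245991} (``The second lemma is due to Azais and Wschebor''), so there is no in-paper argument to compare against, and your reconstruction is necessarily following the cited reference rather than anything in this text. The strategy you describe is indeed the right one and matches that reference in outline: the identity $\E[X''(t)\mid X(t)]=-X(t)I$ forced by stationarity and $\mathrm{Var}(X'(t))=I$, the independence of $W=X''(t_0)+X(t_0)I$ from $(X(t_0),X'(t_0))$, the comparison of the cubic remainder against $u^{\alpha}\|s\|^2$ on the scale $\|s\|=O(u^{-1/2})$, and a Kac--Rice second moment for uniqueness are all the correct ingredients.

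The genuine gap is in the exponent bookkeeping at the end. Your shape estimate holds only on the event that the operator norm of $W$ is at most $u^{\alpha}$, whose complement has probability of exact order $\exp(-c_0u^{2\alpha})$; since $2\alpha<2$, this is enormously larger than $e^{-cu^2/2}$ for every $c>1$, so the sentence ``combining with the shape estimate \dots yields the overall lower bound $1-C\exp(-cu^2/2)$'' does not follow from the pieces you have assembled. The repair is that the Hessian concentration must not be invoked as an unconditional event: it is only needed at points that are already local maxima with value above $u$, so the quantity to bound is the Kac--Rice expectation of the number of \emph{defective} local maxima, which carries the extra factor $\int_u^{\infty}x^d\varphi(x)\,dx\asymp u^{d-1}\varphi(u)$ and yields a bad-event probability $O\bigl(u^{d-1}\varphi(u)e^{-\delta u^{2\alpha}}\bigr)$. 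Even this corrected bound is not of the form $Ce^{-cu^2/2}$ with $c>1$, so you cannot reach the literal conclusion by this route; what the application in Section 3 actually requires is only that the bad event be $o(u^{-N}\varphi(u))$ for every $N$, which the corrected bound does deliver, and you should say so explicitly. Two further points. First, your claim that the pair count is $O(\mathrm{poly}(u)\varphi^2(u))$, hence ``any $c<2$'', is precisely what fails near the diagonal, where the correlation tends to $1$ and the joint density degrades from $e^{-u^2}$ to $e^{-u^2/(1+\rho)}\to e^{-u^2/2}$; the near-diagonal analysis you defer is not a technicality but the step that limits the achievable exponent to \emph{some} $c>1$ (this is the content of the Aza\"\i s--Delmas bound \cite{MR1965978} that the paper invokes for $\E(M_i(M_i-1))$ in Section 3). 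Second, a connected component of $K_u$ meeting $\partial B$ need not contain an interior local maximum, so the single-component claim additionally requires controlling critical points of $X$ restricted to $\partial B$ above the level $u$.
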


From Lemma \ref{leaz}, for each $i=1,\ldots ,n$, with high probability,  the following event $H_i$  occurs: there exists only one local maximum of $X_i(t)$ at the location $t_i\in \overset{\circ}{B_i}=\overset{\circ}{B}$ with value in $[u,u+1]$, and  the corresponding excursion set 
$K_{u,i} :=\{s\in B_i: \, X_i(s) \geq u\}$ satisfies that 
$$ B(t,\underline{r_i})  \subset K_{u,i}  \subset B(t_i,\overline{r_i}),$$
where $\displaystyle    \underline{r_i}=\sqrt{2\frac{X_i(t_i)-u}{X_i(t_i)+u^{\alpha}}}$ and $\displaystyle    \overline{r_i}=\sqrt{2\frac{X_i(t_i)-u}{X_i(t_i)-u^{\alpha}}}$.

Moreover, if for some $i\in \{1,\ldots,n\},$ the complement of the above event $H_i$ occurs, then
\begin{align*}
&\PP \left(\bar{H_i}\cap \{\underset{t\in S}{\sup} \underset{1\leq k \leq n}{\min} X_k(t) \geq u \}\right) \\
\leq & \PP \left(\bar{H_i}\right) \PP\left(\underset{t\in S}{\sup} \underset{1\leq k \leq n, k\neq i}{\min} X_k(t) \geq u \right)\\
\leq & (const) e^{-cu^2/2} \left( \lambda_d(S) u^{d-1}\varphi(u) \right)^{n-1}=o(u^{nd-n-k}\varphi^n(u)).
\end{align*}
Here we use the fact that (see \cite{MR1361884})
$$ \PP (\underset{t\in S}{\max} X_i(t) \geq u ) \leq  (const) \lambda_d(S) u^{d-1}\varphi(u).$$
Therefore, from the fact that
\begin{align*}
\PP \left(\underset{t\in S}{\sup} \underset{1\leq i \leq n}{\min} X_i(t) \geq u \right) &=\PP(\exists t\in S:\, t\in K_{u,i} \forall i=1,\ldots,n)\\
&= \PP(S\cap K_{u,1}\cap\ldots\cap K_{u,n}\neq \emptyset ), 
\end{align*}
 we obtain the upper bound
$$\PP \left(\underset{t\in S}{\sup} \underset{1\leq i \leq n}{\min} X_i(t) \geq u \right)\leq \PP(S\cap B(t_1,\overline{r_1})\cap\ldots\cap B(t_n,\overline{r_n})\neq \emptyset) +o(u^{nd-n-k}\varphi^n(u)),$$
and the lower bound
$$\PP \left(\underset{t\in S}{\sup} \underset{1\leq i \leq n}{\min} X_i(t) \geq u \right)\geq \PP(S\cap B(t_1,\underline{r_1})\cap\ldots\cap B(t_n,\underline{r_n})\neq \emptyset)+o(u^{nd-n-k}\varphi^n(u)).$$

$\bullet$ At first, we deal with the upper bound. By  the Markov inequality, it is at most equal to
 \begin{align*}
 &\PP( \exists t=(t_1,\ldots,t_n)\in B^{\otimes n}: \; \forall i=1,\ldots,n, \, X_i(t) \, \mbox{has a local maximum at} \, t_i,  \\
 & \quad \quad X_i(t_i)\in [u,u+1], t_1 \in S^{+\overline{r_1}} \, \mbox{and} \,  \underset{1\leq i \leq n}{\cap} B(t_i,\overline{r_i}) \neq \emptyset)+o(u^{nd-n-k}\varphi^n(u))\\
 \leq & \E (\mbox{card} \{t=(t_1,\ldots,t_n)\in B^{\otimes n}: \; \forall i=1,\ldots,n, \, X_i(t) \, \mbox{has a local maximum at} \, t_i,  \\
 & \quad \quad  X_i(t_i)\in [u,u+1], t_1 \in S^{+\overline{r_1}} \, \mbox{and} \, \underset{1\leq i \leq n}{\cap} B(t_i,\overline{r_i}) \neq \emptyset \} )+o(u^{nd-n-k}\varphi^n(u)),
\end{align*}
where $B^{\otimes n}$ stands for the Cartesian product set $B\times \ldots\times B$.

By  Rice formula  applied to the vector-valued Gaussian field $Z(t)=(X'_1(t_1),\ldots, X'_n(t_n))$ with $t=(t_1,\ldots,t_n)\in B^{\otimes n}$, the above expectation is equal to
\begin{align*}
E=&\displaystyle \int_{[u,u+1]^{\otimes n}}du_1\ldots du_n\int_{B^{\otimes n}} dt \;p_{X_1(t_1),\ldots,X_n(t_n),X'_1(t_1),...,X'_n(t_n)}(u_1,\ldots,u_n,0,\ldots,0)\\
& \times\E\left(\left|\prod_{i=1}^n \det\left( X^{''}_i(t_i) \right)\mathbb{I}_{\{X^{''}_i(t_i)\preceq 0\}}\right| \mathbb{I}_{\{t_1\in S^{+\overline{r_1}}\} } \mathbb{I}_{\{ \underset{1\leq i \leq n}{\cap} B(t_i,\overline{r_i}) \neq \emptyset \}}\mid X_i(t_i)=u_i,X'_i(t_i)=0 \, \forall i\right),
\end{align*}
where $p_{X_1(t_1),\ldots,X_n(t_n),X'_1(t_1),...,X'_n(t_n)}(.)$ is the joint density function of the random vector $$(X_1(t_1),\ldots,X_n(t_n),X'_1(t_1),...,X'_n(t_n)).$$

Using (\ref{asymp}) and the fact that the fields $X_i$'s are independent and $X'_i(t_i)$ is independent to $X_i(t_i)$ and $X^{''}_i(t_i)$, we have
\begin{align}
 E=\frac{\lambda_d(S)}{(2\pi)^{nd/2}}\int_{[u,u+1]^{\otimes n}} &\prod_{i=1}^n\E\left(\left|  \det \left( X^{''}_i(t_i) \right) \mathbb{I}_{\{X^{''}_i(t_i)\preceq 0\}}\right|\mid X_i(t_i)=u_i\right)\varphi(u_i) \notag \\
 &\quad \times\left(\sum_{\|m\|=k}C_m\overline{r}^m +O(\sum_{\|m\|=k+1}\overline{r}^m )\right)  du_1\ldots du_n. \label{aldo}
\end{align}
Note that under the condition $X_i(t_i)=u_i$ then $\overline{r_i}$ is no more random and is equal to
$$\overline{r_i}=\sqrt{2\frac{u_i-u}{u_i-u^{\alpha}}}.$$ 
Using the fact that (see \cite{MR1965978})
$$\E\left(|\det(X'_i(t))|\mathbb{I}_{\{X_i''(t)\preceq 0\}}\mid X_i(t)=u_i,X'_i(t)=0\right)=u_i^d+ O\left(u_i^{d-2}\right)\; \textnormal{as} \; u_i\rightarrow \infty ,$$
then
\begin{align*}
& \int_u^{u+1} \overline{r_i}^{m_i}  \E\left(\left| \det \left(X^{''}_i(t_i)\right)\mathbb{I}_{\{X^{''}_i(t_i)\preceq 0\}}\right| \mid X_i(t_i)=u_i,X'_i(t_i)=0 \, \right)\varphi(u_i)du_i\\
\simeq & \int_u^{u+1} u_i^d \left(2\frac{u_i-u}{u_i-u^{\alpha}}\right)^{m_i/2} \varphi(u_i)du_i.
\end{align*}
By the change of variable $u_i=u+x/u$, the above integral is equal to
\begin{align*}
 & \int_u^{u+1} (u+x/u)^d \left(\frac{2x/u}{u+x/u-u^{\alpha}}\right)^{m_i/2} \varphi(u+x/u)dx/u\\
 \simeq & 2^{m_i/2}u^{d-(m_i+1)}\varphi(u)\int_0^ux^{m_i/2}e^{-x}dx \simeq 2^{m_i/2}u^{d-(m_i+1)}\varphi(u) \Gamma(1+m_i/2).
\end{align*}
Therefore, for each vector $m=(m_1,\ldots,m_n)$ with norm $k$,
\begin{align*}
 &\int_{[u,u+1]^{\otimes n}} \prod_{i=1}^n\E\left(\left| \det\left( X^{''}_i(t_i) \right)\mathbb{I}_{\{X^{''}_i(t_i)\preceq 0\}}\right|\mid X_i(t_i)=u_i\right)\varphi(u_i) \overline{r}^m du_1\ldots du_n\\
 =& \prod_{i=1}^n \int_u^{u+1} \overline{r_i}^{m_i}\E\left(\left|  \det\left( X^{''}_i(t_i) \right)\mathbb{I}_{\{X^{''}_i(t_i)\preceq 0\}}\right|\mid X_i(t_i)=u_i\right)\varphi(u_i) du_i\\
 \simeq & \prod_{i=1}^n  2^{m_i/2}u^{d-(m_i+1)}\varphi(u) \Gamma(1+m_i/2)\\
 = &2^{k/2}u^{nd-n-k}\varphi^n(u) \prod_{i=1}^n  \Gamma(1+j_i/2).
\end{align*}
Hence, substituting into (\ref{aldo}),
$$E=u^{nd-n-k}\varphi^n(u)\left(\frac{2^{k/2}\lambda_d(S)}{(2\pi)^{nd/2}}\sum_{\|m\|=k}C_m  \prod_{i=1}^n  \Gamma(1+m_i/2)  +o(1)\right).$$

$\bullet$ For the lower bound, recall that
$$S^{-\underline{r_1}}=\{t\in S: \; B(t,\underline{r_1}) \subset S\}.$$
Then the lower bound is at least equal to
\begin{align*}
& \PP\left(\exists t \in B^{\otimes n}:\, t_1\in S^{-\underline{r_1}},\, \forall i: \,X_i(t) \, \mbox{has a local maximum at} \, t_i , X_i(t_i)\in [u,u+1],\right.\\
& \quad \left. \, \mbox{and} \,  \underset{1\leq i \leq n}{\cap} B(t_i,\underline{r_i}) \neq \emptyset  \right)+o(u^{nd-n-k}\varphi^n(u))\\
=& \PP(M_{\underline{r}} \geq 1)+o(u^{nd-n-k}\varphi^n(u))\\
\geq & \E(M_{\underline{r}}) - \E(M_{\underline{r}}(M_{\underline{r}}-1))/2 +o(u^{nd-n-k}\varphi^n(u))\\
\end{align*}
where
\begin{align*}
M_{\underline{r}}=&\mbox{card} \left\{t \in B^{\otimes n}:\, t_1\in S^{-\underline{r_1}},\,  \, \underset{1\leq i \leq n}{\cap} B(t_i,\underline{r_i}) \neq \emptyset,\right.\\
& \quad \quad \quad \quad  \, \mbox{and} \, \forall i: \;  X_i(t) \, \mbox{has a local maximum at} \, t_i , X_i(t_i)\in [u,u+1] \, \}.
\end{align*}
It is clear that
$$M_{\underline{r}}\leq M=\mbox{card} \{t=(t_1,\ldots,t_n)\in B^{\otimes n}: \;  X_i(t) \, \mbox{has a local maximum at} \, t_i, X_i(t_i)\geq u, \forall i\}.$$
Then applying  the Rice formula and using the independent property of the given fields, we have
\begin{align*}
&\E(M_{\underline{r}}(M_{\underline{r}}-1)) \leq \E (M.(M-1))\\
=& \int_{[u,\infty )^{\otimes n}\times [u,\infty )^{\otimes n}}dydz \int_{B^{\otimes n}\times B^{\otimes n}} dtds \\
&\times   \E\left(\left|\prod_{i=1}^n \det \left(X^{''}_i(t_i)\right)\mathbb{I}_{\{X^{''}_i(t_i)\preceq 0\}}  \det \left(X^{''}_i(s_i)\right)\mathbb{I}_{\{X^{''}_i(s_i)\preceq 0\}}\right| \right. \\
& \quad\quad\quad\quad\quad\quad\quad\quad\quad\quad\quad\quad\quad\left. \mid X_i(t_i)=y_i,X'_i(t_i)=0, X_i(s_i)=z_i,X'_i(s_i)=0,\, \forall i\right)\\
& \times p_{X_1(t_1),\ldots,X_n(t_n),X'_1(t_1),...,X'_n(t_n),X_1(s_1),\ldots,X_n(s_n),X'_1(s_1),...,X'_n(s_n)}(y,0,z, 0)\\
=& \prod_{i=1}^n \int_{[u,\infty )\times [u,\infty )}dy_idz_i  \int_{B\times B} dt_ids_i \times p_{X_i(t_i),X'_i(t_i),X_i(s_i),X'_i(s_i)}(y_i,0,z_i,0)\\
& \times \E\left(\left|\det \left(X^{''}_i(t_i)\right)\mathbb{I}_{\{X^{''}_i(t_i)\preceq 0\}}  \det \left(X^{''}_i(s_i)\right)\mathbb{I}_{\{X^{''}_i(s_i)\preceq 0\}}\right| \right. \\
& \quad\quad\quad\quad\quad\quad\quad\quad\quad\quad\quad\quad\quad\left. \mid X_i(t_i)=y_i,X'_i(t_i)=0, X_i(s_i)=z_i,X'_i(s_i)=0\right)\\
=& \prod_{i=1}^n \E (M_i.(M_i-1)),
\end{align*}
where
$$M_i=\textnormal{card}\left\{ t_i\in \overset{\circ}{B_i}: \; X_i(.)\textnormal{ \;has\; a\; local\; maximum\; at}\; t_i, \; X(t)\geq u\right\}.$$
In \cite{MR1965978}, it is proved  that there exist two constants $C,c>1$ such that 
$$\E (M_i.(M_i-1)) \leq Ce^{-cu^2/2}.$$
Hence we have
$$\E (M.(M-1))=o(u^{nd-n-k}\varphi^n(u)).$$
 The calculation of the expectation $\E(\mathcal{M}_{\underline{r}})$ is similar as in the upper bound part and we obtain the same asymptotic formula. Then the result follows.

\section{Proof of Proposition \ref{volu} and some examples}
 Before giving the  proof, we would like to consider three interesting examples where we can give a much simpler formula for the volume in (\ref{asymp2}), and therefore a simpler asymptotic formula for the conjunction probability. We hope that through these examples, the readers can get the intuition about the basic ideas of the detailed proof. 
\subsection{First example: $n=2$ }
This example corresponds to the practical application mentioned in the introduction.  It is clear that
$$\{t_2:\; B(t_1,r_1) \cap B(t_2,r_2) \neq \emptyset \} =B(t_1,r_1+r_2).$$
Therefore
\begin{align*}
& \displaystyle \lambda_{d} \left(t_2:\; B(t_1,r_1) \cap B(t_2,r_2) \neq \emptyset \right) =\lambda_d \left( B(t_1,r_1+r_2) \right)\\
=& \displaystyle \frac{\pi^{d/2}}{\Gamma (1+d/2)} (r_1+r_2)^d=\frac{\pi^{d/2}}{\Gamma (1+d/2)} \sum_{j=0}^d \binom{d}{j} r_1 ^j r_2^{d-j},
\end{align*}
here we use again the fact that the volume of a $d$-dimensional unit ball is $\pi^{d/2}/\Gamma (1+d/2)$.

Then we have an immediate consequence of Proposition \ref{maint} as follows.
\begin{corollary}
 Consider  $X_i(t), 1\leq i \leq 2,$  two independent copies of a  Gaussian field $X$  satisfying Assumption (A). Then as $u$ tends to infinity,
\begin{align}\label{hidim}
 &\PP\left(\underset{t\in S}{\max} \{\min (X_1(t),X_2(t))\}  \geq u \right)\notag \\
 = &\frac{u^{d-2}\varphi^2(u)\lambda_d(S)}{(2\pi)^{d/2}\Gamma(1+d/2)} \left(\sum_{j=0}^d \binom{d}{j}\Gamma(1+j/2)  \Gamma(1+(d-j)/2) +o(1)\right).
\end{align}
\end{corollary}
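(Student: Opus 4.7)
The plan is to deduce this corollary as an immediate specialization of Proposition \ref{maint} to the case $n=2$, using the explicit volume computation that appears just above the statement. First I would identify the constants $k$ and $C_m$ appearing in the hypothesis (\ref{asymp2}). Since $B(t_1,r_1)\cap B(t_2,r_2)\neq\emptyset$ iff $|t_1-t_2|\leq r_1+r_2$, the slice in $t_2$ is exactly the ball $B(t_1,r_1+r_2)$, so its Lebesgue measure equals $\omega_d(r_1+r_2)^d$. A binomial expansion writes this as a homogeneous polynomial of degree $k=d$ in $(r_1,r_2)$ with coefficients $C_{(j,d-j)}=\binom{d}{j}\omega_d$, which verifies (\ref{asymp2}) with $k=d$.

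Second I would plug these values into the conclusion of Proposition \ref{maint}. The exponent collapses as $u^{nd-n-k}=u^{2d-2-d}=u^{d-2}$, and the sum $\sum_{\|m\|=d}$ reduces to a single sum $\sum_{j=0}^d$ running over the partitions $m=(j,d-j)$. This gives
$$\PP\left(\max_{t\in S}\min(X_1(t),X_2(t))\geq u\right)=u^{d-2}\varphi^2(u)\left(\frac{2^{d/2}\omega_d\,\lambda_d(S)}{(2\pi)^d}\sum_{j=0}^d\binom{d}{j}\Gamma(1+j/2)\Gamma(1+(d-j)/2)+o(1)\right).$$
It remains to check that the numerical prefactor matches the one in (\ref{hidim}); using $\omega_d=\pi^{d/2}/\Gamma(1+d/2)$ one computes
$$\frac{2^{d/2}\omega_d}{(2\pi)^d}=\frac{(2\pi)^{d/2}}{(2\pi)^d\Gamma(1+d/2)}=\frac{1}{(2\pi)^{d/2}\Gamma(1+d/2)},$$
which is exactly the coefficient displayed in (\ref{hidim}).

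There is essentially no obstacle to surmount here: the argument is a bookkeeping exercise once Proposition \ref{maint} is granted, and the hypotheses of that proposition for $n=2$ are already built into the statement of the corollary via Assumption (A). The only conceptually nontrivial input, namely the polynomial expansion of the joint volume in (\ref{asymp2}), is trivialized in the two-field case by the elementary identity $B(t_1,r_1)\cap B(t_2,r_2)\neq\emptyset\iff t_2\in B(t_1,r_1+r_2)$, which avoids the inclusion-exclusion gymnastics required for the general $n$ treated in Proposition \ref{volu}.
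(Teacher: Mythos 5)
Your proposal is correct and follows exactly the paper's own route: you identify $k=d$ and $C_{(j,d-j)}=\binom{d}{j}\omega_d$ from the elementary fact that $\{t_2: B(t_1,r_1)\cap B(t_2,r_2)\neq\emptyset\}=B(t_1,r_1+r_2)$, and then substitute into Proposition \ref{maint}. Your explicit verification that $2^{d/2}\omega_d/(2\pi)^d=1/\bigl((2\pi)^{d/2}\Gamma(1+d/2)\bigr)$ is a welcome addition the paper leaves implicit.
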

\begin{proof} We substitute the following parameters in the statement of the main theorem
$$k=d, \quad m=(j,d-j), \; \mbox{and} \;  C_m=\frac{\pi^{d/2}\binom{d}{j}}{\Gamma (1+d/2)}.$$
\end{proof}

\textbf{Remark.} Let us now consider the estimation given by Euler characteristic method. It is clear that (\ref{ECH}) becomes
$$(1,0,\ldots,0)R^2\mu(S).$$
Here the term corresponding to $\mu_d(S)$ (or $\lambda_d(S)$) is
$$\lambda_d(S)b_d\sum_{i=0}^d\frac{\rho_i}{b_i}\frac{\rho_{d-i}}{b_{d-i}}.$$
From the definition of the Euler characteristic densities $\rho_i$'s, this term is equivalent to
$$\frac{\lambda_d(S)u^{d-2}\varphi^2(u)}{(2\pi)^{d/2}}\Gamma((d+1)/2)\Gamma(1/2)\sum_{i=0}^d\frac{1}{\Gamma((i+1)/2)\Gamma((d-i+1)/2)}. $$
Comparing with the asymptotic formula given in (\ref{hidim}), it is surprising to see that 

\begin{align*}
&\Gamma((d+1)/2)\Gamma(1/2)\sum_{i=0}^d\frac{1}{\Gamma((i+1)/2)\Gamma((d-i+1)/2)}\\
=& \frac{1}{\Gamma(1+d/2)} \sum_{i=0}^d \binom{d}{i}\Gamma(1+i/2)  \Gamma(1+(d-i)/2).
\end{align*}
Indeed, we will prove that for every $i=0,\ldots,d,$
\begin{equation}\label{iparti}
\frac{\Gamma((d+1)/2)\Gamma(1/2)}{\Gamma((i+1)/2)\Gamma((d-i+1)/2)}= \frac{1}{\Gamma(1+d/2)} \binom{d}{i}\Gamma(1+i/2)  \Gamma(1+(d-i)/2).
\end{equation}
The equality (\ref{iparti}) is equivalent to
\begin{align*}
 &\Gamma(d/2+1/2) \Gamma(d/2+1) \Gamma(1/2)\\
=&\displaystyle \binom{d}{i} \Gamma(i/2+1/2)\Gamma(i/2+1)\Gamma((d-i)/2+1/2)  \Gamma((d-i)/2+1),
\end{align*}
that is true from Legendre duplication formula
\begin{equation}\label{legendre}
\Gamma(z)\Gamma\left( z+\frac{1}{2}\right)=2^{1-2z}\sqrt{\pi}\Gamma(2z),
\end{equation}
and  from  $\Gamma(n+1)=n!, \quad \Gamma(1/2)=\sqrt{\pi}.$ 
\subsection{Second example: $d=1$}
In this subsection, we would like to revisit the conjunction probability of  stationary centered Gaussian processes. Although that the corresponding result given in \cite{actata} is more powerful and more informative than  the asymptotic formula given in Theorem \ref{t:main} , it would be nice to reprove that 
$$H_{n,2}=\frac{n}{\sqrt{2\pi}}.$$
The affirmative answer is deduced by the following lemma.
\begin{lemma}\label{lem1dim}  For a fixed point $t_1$ on the real axis
 and small enough fixed radii $r_1,r_2,\ldots,r_n$, we have
\begin{equation}\label{1dimn}
\lambda_{n-1} \left((t_2,\ldots,t_n) \in \R^{n-1} :\; \underset{1\leq i \leq n}{\cap} B(t_i,r_i) \neq \emptyset \right)=2^{n-1}\sum_{i=1}^n \left( \prod_{j\neq i}  r_j \right).
\end{equation}
\end{lemma}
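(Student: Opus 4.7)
The plan is to prove (\ref{1dimn}) by induction on $n$, exploiting the special structure of one-dimensional ball intersections: in $\R$, any intersection of closed intervals is itself a closed interval $[L, R]$ with $L = \max_i(t_i - r_i)$ and $R = \min_i(t_i + r_i)$. The base case $n = 2$ was already verified at the start of Section 4.1.

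For the inductive step, write $V_n(r_1,\ldots,r_n)$ for the left-hand side of (\ref{1dimn}), fix $(t_2,\ldots,t_{n-1})$, and set $[L, R] := \bigcap_{i=1}^{n-1} B(t_i, r_i)$ (possibly empty). Then adding the $n$-th ball preserves non-emptiness exactly when $L \le R$ and $t_n \in [L - r_n, R + r_n]$, a set of $t_n$ of length $(R - L) + 2r_n$. Integrating $t_n$ first via Fubini's theorem yields the decomposition
\begin{equation*}
V_n \;=\; 2r_n \, V_{n-1}(r_1,\ldots,r_{n-1}) \;+\; \int_{\R^{n-2}} \lambda_1\!\left(\bigcap_{i=1}^{n-1} B(t_i,r_i)\right) dt_2 \cdots dt_{n-1}.
\end{equation*}
The first term is already in the required inductive form; it remains to evaluate the second integral.

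For that integral I would apply Fubini once more, using the identity $\lambda_1(\bigcap_{i=1}^{n-1} B(t_i,r_i)) = \int_{\R} \prod_{i=1}^{n-1} \mathbb{I}_{\{|x - t_i| \le r_i\}} \, dx$ to swap the order of integration. The factors for $i = 2,\ldots,n-1$ then decouple and each integrates to $2r_i$ over $t_i$, while the $i = 1$ factor (with $t_1$ fixed) restricts $x$ to an interval of length $2r_1$, so that the integral equals $2^{n-1}\prod_{i=1}^{n-1} r_i$. Combining with the inductive hypothesis $V_{n-1} = 2^{n-2} \sum_{i=1}^{n-1} \prod_{j \in \{1,\ldots,n-1\} \setminus \{i\}} r_j$, the target formula $V_n = 2^{n-1} \sum_{i=1}^n \prod_{j \ne i} r_j$ drops out: the $2r_n V_{n-1}$ contribution supplies the summands with $i \in \{1,\ldots,n-1\}$, and the second integral supplies the missing $i = n$ summand. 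I do not anticipate any real obstacle, since the whole argument reduces to two applications of Fubini plus a small combinatorial bookkeeping step. A sleeker geometric alternative would be to recognize the set in (\ref{1dimn}) as the Minkowski sum of the diagonal segment $\{(x,\ldots,x) : x \in [t_1 - r_1, t_1 + r_1]\}$ with the box $\prod_{i=2}^n [-r_i, r_i]$ in $\R^{n-1}$, then invoke the prism volume formula together with Cauchy's projection formula for boxes; but the inductive Fubini route is more transparent and will dovetail naturally with the stratification used in the proof of Proposition \ref{volu} for general $d$.
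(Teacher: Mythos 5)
Your proposal is correct and follows essentially the same route as the paper: induction on $n$, integrating out $t_n$ first to get the decomposition $V_n = 2r_n V_{n-1} + \int \lambda_1\bigl(\bigcap_{i=1}^{n-1}B(t_i,r_i)\bigr)\,dt_2\cdots dt_{n-1}$, and then a second application of Fubini (introducing the auxiliary point $y$ in the intersection) to evaluate the remaining integral as $2^{n-1}\prod_{i=1}^{n-1}r_i$. The only difference is cosmetic (your Minkowski-sum aside is not developed, and the paper keeps an explicit non-emptiness indicator that you absorb into $\lambda_1(\emptyset)=0$), so there is nothing substantive to add.
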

\begin{proof}
We will prove by induction on $n$.

$\bullet$ For $n=2$, it is obvious as in the above subsection.

$\bullet$ Assume that the statement is true from $2$ to $n-1$.  

$\bullet$ For $n$-tuple $(t_1,t_2,\ldots,t_n)$, we would like to calculate the volume as  the following integral

\begin{align}
& \displaystyle \int_{\R^{n-1}} \mathbb{I}_{\{\underset{1\leq i \leq n}{\cap} B(t_i,r_i) \neq \emptyset \}} dt_2\ldots dt_{n} \notag \\
= & \displaystyle\int_{\R^{n-2}} \mathbb{I}_{\{\underset{1\leq i \leq n-1}{\cap} B(t_i,r_i) \neq \emptyset \}} dt_2\ldots dt_{n-1} \int_{\R}\mathbb{I}_{\{B(t_n,r_n) \cap \left( \underset{1\leq i \leq n-1}{\cap} B(t_i,r_i) \right) \neq \emptyset \}} dt_n \label{didim}.
\end{align}
Again by induction, it is clear that if the intersection $\displaystyle \underset{1\leq i \leq n-1}{\cap} B(t_i,r_i)$ is non-empty, it is an interval. Therefore
$$\int_{\R}\mathbb{I}_{\{B(t_n,r_n) \cap \left( \underset{1\leq i \leq n-1}{\cap} B(t_i,r_i) \right) \neq \emptyset \}} dt_n=\lambda_1 \left( \underset{1\leq i \leq n-1}{\cap} B(t_i,r_i) \right) +2r_n, $$
and substitute in (\ref{didim}), the considering volume is equal to
\begin{equation}\label{didim1}
\int_{\R^{n-2}} \mathbb{I}_{\{\underset{1\leq i \leq n-1}{\cap} B(t_i,r_i) \neq \emptyset \}} \left( \lambda_1 \left( \underset{1\leq i \leq n-1}{\cap} B(t_i,r_i) \right) +2r_n \right) dt_2\ldots dt_{n-1}.
\end{equation}
By inductive hyphothesis,
$$2r_n \int_{\R^{n-2}} \mathbb{I}_{\{\underset{1\leq i \leq n-1}{\cap} B(t_i,r_i) \neq \emptyset \}}\ldots dt_{n-1}=2r_n.2^{n-2}\sum_{i=1}^{n-1} \left( \prod_{1\leq j \leq n-1, \; j\neq i}  r_j \right).$$
For the rest term in the integral (\ref{didim1}), let us introduce a new variable $y$ corresponding to the point in the intersection, and we have
\begin{align*}
& \displaystyle \int_{\R^{n-2}} \mathbb{I}_{\{\underset{1\leq i \leq n-1}{\cap} B(t_i,r_i) \neq \emptyset \}}  \lambda_1 \left( \underset{1\leq i \leq n-1}{\cap} B(t_i,r_i) \right)  dt_2\ldots dt_{n-1}\\
=& \displaystyle \int_{\R^{n-2}} \int_{\R} \mathbb{I}_{\{\underset{1\leq i \leq n-1}{\cap} B(t_i,r_i) \neq \emptyset \}}   \mathbb{I}_{\{y \in \underset{1\leq i \leq n-1}{\cap} B(t_i,r_i) \}}   dt_2\ldots dt_{n-1} dy \\
= & \displaystyle \int_{B(t_1,r_1)}dy \left(\int_{\R^{n-2}}  \mathbb{I}_{\{y \in \underset{1\leq i \leq n-1}{\cap} B(t_i,r_i) \}}   dt_2\ldots dt_{n-1} \right)\\
= & \displaystyle \int_{B(t_1,r_1)}dy \left( \int_{B(y,r_2)}dy_2 \ldots \int_{B(y,r_{n-1})} dt_{n-1}\right)\\
= & \displaystyle \int_{B(t_1,r_1)} \left(\prod_{i=2}^{n-1} (2r_i)  \right)dy=2^{n-1}\prod_{i=1}^{n-1} r_i,
\end{align*}
where the equality in the third line follows from Fubini theorem. The result follows easily.
\end{proof}

Applying Proposition \ref{maint} in this case with respect to $k=n-1$, $m$ is $n$- dimensional vector with $n-1$ unit entries and only one zero entry and $C_m=2^{n-1}$, we have

\begin{corollary}
 Let $X_i(t), 1\leq i \leq n,$ be the independent copies of a  Gaussian process $X$  satisfying Assumption (A). Then as $u$ tends to infinity,
$$\PP \left(\underset{t\in [0,T]}{\max} \underset{1\leq i \leq n}{\min} X_i(t) \geq u \right)=u^{-(n-1)}\varphi^n(u)\left(\frac{nT}{\sqrt{2\pi}} +o(1)\right).$$
\end{corollary}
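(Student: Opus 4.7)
The strategy is to plug the explicit volume formula of Lemma \ref{lem1dim} into Proposition \ref{maint} and simplify. Since $d=1$, the polynomial identity (\ref{1dimn}) exhibits the volume as a homogeneous polynomial of total degree $k = n-1$ in $r_1,\ldots,r_n$, with $n$ monomials. Each monomial is of the form $\prod_{j\neq i} r_j$, corresponding to the multi-index $m = m^{(i)} \in \{0,1\}^n$ that has a $0$ in the $i$-th slot and a $1$ everywhere else, and its coefficient is $C_{m^{(i)}} = 2^{n-1}$. First I would record these values and check that the hypothesis (\ref{asymp2}) of Proposition \ref{maint} is satisfied in the form needed.

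Next I would substitute $d = 1$, $k = n-1$, $\lambda_1(S) = T$, and these $(m, C_m)$ into the asymptotic expression in Proposition \ref{maint}. The exponent of $u$ comes out to be $nd - n - k = -(n-1)$, so the $u$-asymptotics is correct. For the constant, the product $\prod_{i=1}^n \Gamma(1 + m_i/2)$ evaluated at $m = m^{(\ell)}$ equals $\Gamma(1)\cdot \Gamma(3/2)^{n-1} = \bigl(\sqrt{\pi}/2\bigr)^{n-1}$, independent of which slot $\ell$ carries the zero. Summing over the $n$ choices of $\ell$ yields
\begin{equation*}
\sum_{\|m\|=n-1} C_m \prod_{i=1}^n \Gamma(1 + m_i/2) = n \cdot 2^{n-1} \cdot \bigl(\sqrt{\pi}/2\bigr)^{n-1} = n\, \pi^{(n-1)/2}.
\end{equation*}

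Finally I would collect the prefactor: multiplying by $2^{k/2}\lambda_d(S)/(2\pi)^{nd/2} = 2^{(n-1)/2} T / (2\pi)^{n/2}$ gives
\begin{equation*}
\frac{2^{(n-1)/2} T}{(2\pi)^{n/2}} \cdot n\, \pi^{(n-1)/2} = \frac{nT}{\sqrt{2\pi}},
\end{equation*}
which matches the claimed coefficient. There is no substantive obstacle here: the content has been packaged into Proposition \ref{maint} and Lemma \ref{lem1dim}, and the corollary is a routine bookkeeping of Gamma-factors and powers of $2$ and $\pi$. The only place one must be a little careful is in confirming that every monomial in (\ref{1dimn}) genuinely has total degree $n-1$, so that $k$ in Proposition \ref{maint} is well-defined; this is immediate from the structure $\prod_{j\neq i} r_j$.
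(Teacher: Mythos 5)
Your proposal is correct and follows exactly the paper's route: the paper likewise obtains the corollary by applying Proposition \ref{maint} with $k=n-1$, $m$ ranging over the $0$--$1$ vectors with a single zero entry, and $C_m=2^{n-1}$ from Lemma \ref{lem1dim}. You merely make explicit the Gamma-factor arithmetic that the paper leaves to the reader, and your computation checks out.
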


\subsection{Third example: $d=2$ }

\begin{lemma} For a fixed point $t_1$ in the plane and small enough fixed radii $r_1,r_2,\ldots,r_n$, we have
\begin{align*}
&\lambda_{2(n-1)} \left((t_2,\ldots, t_n) \in \R^{2(n-1)} :\; \underset{1\leq i \leq n}{\cap} B(t_i,r_i) \neq \emptyset \right)\\
=&\pi^{n-1)} \sum_{i=1}^n \left(\prod_{j\neq i} r_j^2 \right)+2\pi^{n-1} \sum_{1\leq i<j\leq n} \left(r_i r_j\prod_{k\neq i,j} r_k^2 \right) .
\end{align*}
\end{lemma}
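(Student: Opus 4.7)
The plan is to deduce this lemma by specialising Proposition \ref{volu} to $d=2$ and evaluating each surviving summand. Because $k_i \in \{0,1,2\}$ and the nested constraints in particular force $\sum_{i=2}^n k_i \geq 2(n-2)$, while trivially $\sum_{i=2}^n k_i \leq 2(n-1)$, only three values of this sum contribute, matching the three powers of $r_1$ in the statement: the terms $r_1^0$, $r_1^1$, $r_1^2$ arise when $\sum_{i=2}^n k_i$ equals $2(n-1)$, $2n-3$, and $2n-4$ respectively.

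I would then enumerate the admissible tuples $(k_2,\ldots,k_n)$ in each case: the $r_1^0$ case forces all $k_i=2$ (one configuration); the $r_1^1$ case forces exactly one $k_\ell=1$ and the rest equal to $2$ (giving $n-1$ configurations); and the $r_1^2$ case splits into either exactly one $k_\ell=0$ with the rest $=2$, or exactly two entries $k_{\ell_1}=k_{\ell_2}=1$ with the rest $=2$. Substituting $\omega_0=1$, $\omega_1=2$, $\omega_2=\pi$ into the coefficient
\[
\frac{\omega_d\,\omega_{(n-1)d-\sum_i k_i}}{\omega_{\sum_i k_i-(n-2)d}\,\prod_i \omega_{d-k_i}}\cdot\frac{d!}{\bigl[\sum_i k_i-(n-2)d\bigr]!\,\prod_i (d-k_i)!}
\]
and multiplying by $r_1^{2(n-1)-\sum_i k_i}\prod_{i\geq 2} r_i^{k_i}\omega_{k_i}$, one obtains respectively $\pi^{n-1}\prod_{j\neq 1}r_j^2$ (all twos), $2\pi^{n-1}r_1 r_\ell\prod_{i\geq 2,\,i\neq \ell} r_i^2$ (one $k_\ell=1$), $\pi^{n-1}\prod_{j\neq \ell}r_j^2$ (one $k_\ell=0$), and $2\pi^{n-1}r_{\ell_1} r_{\ell_2}\prod_{k\neq \ell_1,\ell_2} r_k^2$ (two ones).

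Summing the first family (which is the $\ell=1$ contribution) together with the third family over $\ell\in\{2,\ldots,n\}$ reassembles $\pi^{n-1}\sum_{i=1}^n\prod_{j\neq i}r_j^2$, while summing the second family (with $i=1$ fixed) together with the fourth family over pairs $2\leq \ell_1<\ell_2\leq n$ reassembles $2\pi^{n-1}\sum_{1\leq i<j\leq n}r_i r_j\prod_{k\neq i,j}r_k^2$. Adding these gives the claimed formula.

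The main obstacle I anticipate is not the arithmetic but the \emph{verification} that each of the four configuration types above is actually admissible in the full nested sum of Proposition \ref{volu}, i.e.\ that $\sum_{j=i}^n k_j\geq 2(n-i)$ holds for every $i\in\{2,\ldots,n\}$. This check is elementary: the total deficit $2(n-1)-\sum_{i=2}^n k_i$ is at most $2$ in every case considered, and one easily sees that the deficit on any suffix $\sum_{j=i}^n k_j$ is bounded above by this total, so the nested constraint is never violated.
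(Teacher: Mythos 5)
Your derivation is correct, and the arithmetic checks out: for $d=2$ the constraint $\sum_{i=2}^n k_i\ge 2(n-2)$ together with $k_i\le 2$ does imply every suffix constraint $\sum_{j=i}^n k_j\ge 2(n-i)$ (since dropping the prefix $k_2,\ldots,k_{i-1}$ costs at most $2(i-2)$), your enumeration of the admissible tuples is exhaustive, and substituting $\omega_0=1$, $\omega_1=2$, $\omega_2=\pi$ into the coefficient of Proposition \ref{volu} yields exactly the four families of terms you list, which reassemble into the two sums of the statement. However, this is a genuinely different route from the paper's. The paper proves this lemma \emph{before} and \emph{independently of} Proposition \ref{volu}, by induction on $n$: it peels off the last variable via the planar Steiner formula $\lambda_2(K^{+r_n})=\pi r_n^2+r_n\cdot\mathrm{peri}(K)+\lambda_2(K)$ for the convex set $K=\cap_{i\le n-1}B(t_i,r_i)$, and evaluates the resulting area and perimeter integrals by Fubini (introducing an auxiliary point $y$ ranging over the intersection, respectively over the arcs of each bounding circle). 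That argument is elementary and self-contained, and in the paper it serves as a warm-up whose structure (tube formula plus Fubini) motivates the general Weyl--Crofton proof of Proposition \ref{volu} in Section 4.4. Your specialization is shorter and logically sound --- there is no circularity, since the proof of Proposition \ref{volu} nowhere uses the $d=2$ lemma --- but it presupposes the full general machinery, so as written it must be placed \emph{after} Section 4.4 rather than where the lemma currently sits; it also doubles as a useful consistency check on the general formula.
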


\begin{proof} We will prove by induction on $n$.  

$\bullet$ Case $n=2$ has been considered in Subsection 4.1.

$\bullet$ For general $n\geq 3$, we have
\begin{align*} 
 &\lambda_{2(n-1)} \left((t_2,t_3,\ldots,t_n) \in \R^{2(n-1)} :\; \underset{1\leq i \leq n}{\cap} B(t_i,r_i) \neq \emptyset \right) \\
 =& \displaystyle \int_{\R^{2(n-2)}} dt_2 \ldots dt_{n-1} \left[ \mathbb{I}_{\{\underset{1\leq i \leq n-1}{\cap} B(t_i,r_i)\neq \emptyset \}} \int_{\R^2} \mathbb{I}_{\{ B(t_n,r_n) \cap \left( \underset{1\leq i \leq n-1}{\cap} B(t_i,r_i) \right) \neq \emptyset \}}  dt_n\right] \\
= & \displaystyle \int_{\R^{2(n-2)}} dt_2 \ldots dt_{n-1} \left[ \mathbb{I}_{\{\underset{1\leq i \leq n-1}{\cap} B(t_i,r_i)\neq \emptyset \}} \lambda_2 \left(  \left( \underset{1\leq i \leq n-1}{\cap} B(t_i,r_i) \right)^{+r_n}  \right)\right] .\\
\end{align*}
Since the intersection $ \left( \underset{1\leq i \leq n-1}{\cap} B(t_i,r_i) \right)$ is a convex set then from the Steiner formula,
$$ \lambda_2 \left(  \left( \underset{1\leq i \leq n-1}{\cap} B(t_i,r_i) \right)^{+r_n}  \right)=\pi r_n^2 +r_n . \mbox{peri}\left( \underset{1\leq i \leq n-1}{\cap} B(t_i,r_i) \right) + \lambda_2 \left( \underset{1\leq i \leq n-1}{\cap} B(t_i,r_i) \right), $$
where \textit{peri(.)} stands for the perimeter of the set.

Therefore, the considering volume is equal to
\begin{align} \label{genen2}
&\displaystyle \int_{\R^{2(n-2)}} dt_2 \ldots dt_{n-1} \mathbb{I}_{\{\underset{1\leq i \leq n-1}{\cap} B(t_i,r_i)\neq \emptyset \}} &\\
&\quad\quad\quad\quad\quad\quad\quad \left[  \pi r_n^2 +r_n . \mbox{peri}\left( \underset{1\leq i \leq n-1}{\cap} B(t_i,r_i)\right) + \lambda_2 \left( \underset{1\leq i \leq n-1}{\cap} B(t_i,r_i) \right)  \right].& \notag
\end{align}

$-$ For the first term in (\ref{genen2}), by the inductive hypothesis,
\begin{align*}
&\pi r_n^2 \int_{\R^{2(n-2)}} dt_2 \ldots dt_{n-1} \mathbb{I}_{\{\underset{1\leq i \leq n-1}{\cap} B(t_i,r_i)\neq \emptyset \}} \\
=& \pi r_n^2  \left[ \pi^{n-2} \sum_{i=1}^{n-1} \left(\prod_{j\neq i} r_j^2 \right)+2\pi^{n-2} \sum_{1\leq i<j\leq n-1} \left(r_i r_j\prod_{k\neq i,j} r_k^2 \right) \right].
\end{align*}

$-$ For the third term in (\ref{genen2}), we introduce a new variable $y$ corresponding to the point in the intersection, and we use Fubini theorem to obtain that
\begin{align*}
&\displaystyle \int_{\R^{2(n-2)}} dt_2 \ldots dt_{n-1} \lambda_2\left(\underset{1\leq i \leq n-1}{\cap} B(t_i,r_i) \right)= \int_{\R^{2(n-2)}} dt_2 \ldots dt_{n-1}  \left[ \int_{\underset{1\leq i \leq n-1}{\cap} B(t_i,r_i) } dy\right] \\
=& \displaystyle \int_{B(t_1,r_1)} dy \left[ \int_{\R^{2(n-2)}} dt_2 \ldots dt_{n-1} \prod_{i=2}^{n-1} \mathbb{I}_{\{t_i \in B(y,r_i)\}} \right] = \pi^{n-1} \prod_{i=1}^{n-1}r_i^2.\\
\end{align*}

 $-$ For the second term in (\ref{genen2}), let us denote $S(t,r)$ the circle with radius $r$ at center point $t$, i.e. the boundary of the disk $B(t,r)$. It is clear that the perimeter of the intersection $\left( \underset{1\leq i \leq n-1}{\cap} B(t_i,r_i)\right)$ is the sum of the lengths of the arcs on each circle $S(t_i,r_i),  i=\overline{1,n-1}$. For the first kind with respect to the arc on $S(t_1,r_1)$, again by Fubini theorem, we have
\begin{align*}
&\displaystyle r_n\int_{\R^{2(n-2)}} dt_2 \ldots dt_{n-1} \left[ \int_{S(t_1,r_1)} \mathbb{I}_{\{y\in \underset{2\leq i \leq n-1}{\cap} B(t_i,r_i)\}} dy\right]\\
=& \displaystyle r_n\int_{S(t_1,r_1)} dy \left[ \int_{\R^{2(n-2)}} dt_2 \ldots dt_{n-1} \prod_{i=2}^{n-1} \mathbb{I}_{\{t_i \in B(y,r_i)\}} \right] =2\pi^{n-1}r_1r_n \prod_{i=2}^{n-1}r_i^2.
\end{align*}

For the second kind with respect to the arc on $S(t_i,r_i)$ with $i=2,\ldots,n-1$. Without loss of generality, we consider the arc on $S(t_2,r_2)$. We have
\begin{align*}
&\displaystyle r_n\int_{\R^{2(n-2)}} dt_2 \ldots dt_{n-1} \left[ \int_{S(t_2,r_2)} \mathbb{I}_{\{y\in   \underset{ i\neq 2}{\cap} B(t_i,r_i)\} } dy\right]\\
=& \displaystyle r_n\int_{ B(t_1,r_1+r_2)} dt_2 \int_{S(t_2,r_2)} dy \mathbb{I}_{\{y \in B(t_1,r_1)\}}\left[ \int_{\R^{2(n-3)}} dt_3 \ldots dt_{n-1} \prod_{i=3}^{n-1} \mathbb{I}_{\{t_i \in B(y,r_i)\}} \right] \\
=& \displaystyle r_n \pi^{n-3}\prod_{i=3}^{n-1}r_i^2\int_{ B(t_1,r_1+r_2)} dt_2 \int_{S(t_2,r_2)} \mathbb{I}_{\{ y\in B(t_1,r_1)\}}dy  =r_n \pi^{n-3}\prod_{i=3}^{n-1}r_i^2 \int_{B(t_1,r_1)}dy \int_{S(y,r_2)}dt_2\\
= & \displaystyle 2\pi^{n-1} r_n r_1^2 r_2  \prod_{i=3}^{n-1}r_i^2.
\end{align*}
 The result follows by summing up three terms in (\ref{genen2}).
\end{proof}
From the above lemma, we can apply Proposition \ref{maint} to deduce the following corollary.
\begin{corollary}
Consider  $X_i(t), 1\leq i \leq n,$ being independent copies of a  two-dimensional Gaussian field $X$  satisfying Assumption (A). Then as $u$ tends to infinity,
\begin{equation}
 \PP\left(\underset{t\in S}{\max}  \underset{1\leq i \leq n}{\min} X_i(t) \geq u \right)= u^{2-n}\varphi^n(u) \left[ \frac{\lambda_2(S)}{2\pi} \left(n+\frac{n(n-1)\pi}{4}\right) +o(1)\right].
\end{equation}
\end{corollary}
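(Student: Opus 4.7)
The plan is to apply Proposition \ref{maint} directly, using as input the volume expansion just established in the preceding lemma. Setting $d=2$, the exponent of $u$ in Proposition \ref{maint} is $nd-n-k = n - k$, and both kinds of monomials that appear in the lemma have total degree $k = 2(n-1)$, matching the prediction $k=(n-1)d$ from Proposition \ref{volu}. So the expected power of $u$ is $u^{n - 2(n-1)} = u^{2-n}$, which agrees with the claimed corollary.

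I would then classify the multi-indices $m=(m_1,\ldots,m_n)$ with $\|m\|=2(n-1)$ appearing in the lemma into two families:
\begin{itemize}
\item[(i)] indices with exactly one entry equal to $0$ and the remaining $n-1$ entries equal to $2$, arising from terms $\prod_{j\neq i}r_j^2$; there are $n$ such indices, each with coefficient $C_m = \pi^{n-1}$.
\item[(ii)] indices with exactly two entries equal to $1$ and the remaining $n-2$ entries equal to $2$, arising from terms $r_i r_j \prod_{k\neq i,j}r_k^2$; there are $\binom{n}{2}$ such indices, each with coefficient $C_m = 2\pi^{n-1}$.
\end{itemize}
The Gamma products are elementary: for type (i), $\prod_i \Gamma(1+m_i/2) = \Gamma(1)\,\Gamma(2)^{n-1} = 1$, and for type (ii), $\prod_i \Gamma(1+m_i/2) = \Gamma(3/2)^2\,\Gamma(2)^{n-2} = \pi/4$. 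Summing gives
\[
\sum_{\|m\|=2(n-1)} C_m \prod_{i=1}^n \Gamma(1+m_i/2) = n\,\pi^{n-1} + \frac{n(n-1)}{2}\cdot 2\pi^{n-1} \cdot \frac{\pi}{4} = \pi^{n-1}\!\left(n + \frac{n(n-1)\pi}{4}\right).
\]

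Finally, I would substitute into Proposition \ref{maint}: the prefactor becomes
\[
\frac{2^{k/2}\lambda_2(S)}{(2\pi)^{n}} \cdot \pi^{n-1}\!\left(n + \frac{n(n-1)\pi}{4}\right) = \frac{2^{n-1}\pi^{n-1}}{2^n \pi^n}\,\lambda_2(S)\!\left(n + \frac{n(n-1)\pi}{4}\right) = \frac{\lambda_2(S)}{2\pi}\!\left(n + \frac{n(n-1)\pi}{4}\right),
\]
which is exactly the bracketed quantity in the corollary. There is no real obstacle here; the whole argument is bookkeeping, and the only place where one must be careful is confirming that the two families above exhaust all multi-indices of norm $2(n-1)$ appearing in the lemma and counting each coefficient with the correct multiplicity.
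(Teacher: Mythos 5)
Your proposal is correct and follows exactly the paper's route: the paper likewise deduces the corollary by plugging the $d=2$ volume lemma into Proposition \ref{maint} with $k=2(n-1)$, and your classification of the multi-indices into the two families with coefficients $\pi^{n-1}$ and $2\pi^{n-1}$, together with the Gamma-product and prefactor computations, supplies precisely the bookkeeping the paper leaves implicit.
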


\subsection{Proof of Proposition \ref{volu}}\label{detail}
As the readers can see in Subsections 4.2 and 4.3, the basic ideas of the proofs are Fubini theorem  to change the order of the variables in the integrals, and Steiner formula to calculate the area (length) of the $\epsilon$- neighborhood of some set. In general, these ideas are still useful. Let us introduce Weyl tube formula that is a generalization of Steiner formula in higher dimension, and also Crofton formula that will be used later.
\subsubsection{Preliminaries on Weyl tube formula and  Crofton formula \quad\quad\quad\quad\quad\quad\quad\quad\quad\quad}

- \textbf{Weyl tube formula}: Let $M$ be an $m$-dimensional manifold with
positive reach or critical radius (see \cite{MR2319516}) embedded in $R^d$ which is endowed with the canonical Riemannian
structure on $R^d$. Then for any positive $\epsilon$ less than the critical radius of $M$, the Lebesgue volume of the $\epsilon$- neighborhood of $M$ in $R^d$ is given by
\begin{equation}\label{weylt}
\lambda_d\left( M^{+\epsilon}\right)=\sum_{j=0}^m \epsilon^{d-j}\omega_{d-j}\mu_j(M),
\end{equation}

where $\mu_0(M),\mu_1(M),\ldots,\mu_d(M)$ are the intrinsic Killing-Lipschitz curvatures of $M$, that do not depend on the ambient space $\R^d$.
Note that when $M$ is convex, then its critical radius equals to infinity, therefore the above Weyl tube fomula holds true for any positive $\epsilon$. 

- \textbf{Crofton formula}: Borrowing the notations from \cite{MR1608265} and \cite{MR3842168}, let $\mbox{Gr}(d, k)$ be the Grassmanian of all $k-$ dimensional linear subspaces of $\R^d$ with the invariant Haar measure $\nu^d_k$.  Let $\mbox{Graff}(d, k)$ be the affine Grassmannian of all $k$-dimensional affine subspaces of $\R^d$. We define the measure $\lambda^d_k$ on $\mbox{Graff}(d, k)$ that is
invariant under the group of Euclidean motions as follows.

Given a $k$-dimensional affine subspaces $V^* \in \mbox{Graff}(d, k)$. Let $V^{*,\bot}$ be the maximal
linear subspace of $\R^d$ orthogonal to $V^*$ and containing the origin. There
is a unique maximal linear subspace $V \in \mbox{Gr}(d, k)$ orthogonal to $V^{*,\bot}$. It means that $V$ is the $k$-dimensional linear subspace parallel to $V^*$. Denote $p$ by the intersection point between $V^*$ and $V^{*,\bot} =V^{\bot}$. Thus, any $V^* \in \mbox{Graff}( d, k)$ corresponds one-to-one to a pair $(V, p) \in \mbox{Gr}(d, k)\times V^{\bot}$, in the sense that $V^*=V+p$, where the plus symbol stands for the translation.Then, the measure $\lambda^d_k$ on $\mbox{Graff}(d, k)$ is defined as, for any real-valued measurable
function $f$ on $\mbox{Graff}(d, k)$,
$$\int_{\mbox{Graff}(d, k)} f(V^*) d\lambda^d_k(V^*) =  \int_{\mbox{Gr}(d, k)}\int_{V^{\bot}} f(V+p) d\nu^d_k(V) dp,$$
where $dp$ denotes the ordinary Lebesgue measure on $V^{\bot}\cong \R^{d-k}$.

In a special case, let $M$ be a suitable compact subset of $\R^d$ and function $f$ be the Killing-Lipschitz curvatures of the intersection $M\cap V^*$, we have the Crofton formula as
\begin{equation}\label{croft}
\int_{\mbox{Graff}(d, k)} \mu_j(M\cap V^*) d\lambda^d_k(V^*) = \begin{bmatrix}
d-k+j\\j
\end{bmatrix} \mu_{d-k+j}(M),
\end{equation}
where the flag symbol stands for
$$ \begin{bmatrix}
m\\n
\end{bmatrix}=\frac{\omega_m}{\omega_n\omega_{m-n}} \binom{m}{n}.$$
\subsubsection{Detailed proof}
It is clear that
\begin{align*}
I &\displaystyle = \lambda_{(n-1)d} \left((t_2,\ldots,t_n) \in \R^{d (n-1)} :\; \underset{1\leq i \leq n}{\cap} B(t_i,r_i) \neq \emptyset \right)\\
&= \displaystyle \int_{\R^{(n-1)d}} \mathbb{I}_{\{\underset{1\leq i \leq n}{\cap} B(t_i,r_i) \neq \emptyset \}} dt_2\ldots dt_{n} \notag \\
 &= \displaystyle\int_{\R^{(n-2)d}} \mathbb{I}_{\{\underset{1\leq i \leq n-1}{\cap} B(t_i,r_i) \neq \emptyset \}} dt_2\ldots dt_{n-1} \int_{\R^d}\mathbb{I}_{\{B(t_n,r_n) \cap \left( \underset{1\leq i \leq n-1}{\cap} B(t_i,r_i) \right) \neq \emptyset \}} dt_n\\
 &= \displaystyle\int_{\R^{(n-2)d}} \mathbb{I}_{\{\underset{1\leq i \leq n-1}{\cap} B(t_i,r_i) \neq \emptyset \}} \lambda_d\left( \left( \underset{1\leq i \leq n-1}{\cap} B(t_i,r_i) \right) ^{+r_n}\right) dt_2\ldots dt_{n-1} 
\end{align*}

By Weyl tube formula in (\ref{weylt}),
$$\lambda_d\left( \left( \underset{1\leq i \leq n-1}{\cap} B(t_i,r_i) \right) ^{+r_n}\right) =\sum_{k_n=0}^d r_n^{k_n}\omega_{k_n} \mu_{d-k_n}\left(\underset{1\leq i \leq n-1}{\cap} B(t_i,r_i)\right).$$

Substituting this expansion in the integral, we have
\begin{align*}
I &\displaystyle =  \sum_{k_n=0}^d r_n^{k_n}\omega_{k_n} \int_{\R^{(n-2)d}} \mu_{d-k_n}\left(\underset{1\leq i \leq n-1}{\cap} B(t_i,r_i)\right) dt_2\ldots dt_{n-1}\\
&\displaystyle =  \sum_{k_n=0}^d r_n^{k_n}\omega_{k_n} \int_{\R^{(n-2)d}}  dt_2\ldots dt_{n-1} \int_{\mbox{Graff}(d,k_n)}\mu_0 \left(\underset{1\leq i \leq n-1}{\cap} B(t_i,r_i) \cap V^* \right) d\lambda^d_{k_n}(V^*)\\
&\displaystyle =  \sum_{k_n=0}^d r_n^{k_n}\omega_{k_n} \int_{\R^{(n-2)d}}  dt_2\ldots dt_{n-1} \int_{\mbox{Gr}(d,k_n)}d\nu^d_{k_n}(V) \int_{V^{\bot}} \mu_0\left(\underset{1\leq i \leq n-1}{\cap} B(t_i,r_i) \cap (V+p) \right) dp  \\
&\displaystyle =  \sum_{k_n=0}^d r_n^{k_n}\omega_{k_n} \int_{\R^{(n-2)d}}  dt_2\ldots dt_{n-1} \int_{\mbox{Gr}(d,k_n)}d\nu^d_{k_n}(V) \int_{V^{\bot}}  \mathbb{I}_{\{ p\in \underset{1\leq i \leq n-1}{\cap} B(t_i,r_i)\mid_{V^{\bot}} \}} dp , \\
\end{align*}
where the second line follows from Crofton formula (\ref{croft}) applied to the case $j=0$; the third line follows from the definition of the measure $\lambda^d_{k_n}$ and the last line follows from the fact that the intersection $\underset{1\leq i \leq n-1}{\cap} B(t_i,r_i) \cap (V+p)$ is empty or  a non-empty convex, with  Euler characteristic  0 or 1.  The value  depends on whether the point $p$ is on the orthogonal projection of $\underset{1\leq i \leq n-1}{\cap} B(t_i,r_i)$ on the subspace $V^{\bot}$.

By the  Fubini theorem, we continue as
\begin{align*}
I &\displaystyle = \sum_{k_n=0}^d r_n^{k_n}\omega_{k_n} \int_{\R^{(n-3)d}}  dt_2\ldots dt_{n-2} \int_{\mbox{Gr}(d,k_n)}d\nu^d_{k_n}(V) \times \\
  &\displaystyle \quad\quad\quad \int_{V^{\bot}}  \mathbb{I}_{\{ p\in \underset{1\leq i \leq n-2}{\cap} B(t_i,r_i)\mid_{V^{\bot}} \}} dp \int_{\R^d}\mathbb{I}_{\{B(t_{n-1},r_{n-1}) \cap \left( \underset{1\leq i \leq n-2}{\cap} B(t_i,r_i) \cap (V+p)\right) \neq \emptyset \}} dt_{n-1}.\\
\end{align*}
Here  using again Weyl tube formula (\ref{weylt}), we have
\begin{align*}
I &=\displaystyle \sum_{k_n=0}^d r_n^{k_n}\omega_{k_n} \int_{\R^{(n-3)d}}  dt_2\ldots dt_{n-2} \int_{\mbox{Gr}(d,k_n)}d\nu^d_{k_n}(V) \times \\
  &\displaystyle \quad\quad \int_{V^{\bot}}  \mathbb{I}_{\{ p\in \underset{1\leq i \leq n-2}{\cap} B(t_i,r_i)\mid_{V^{\bot}} \}}  \sum_{k_{n-1}=d-k_n}^d r_{n-1}^{k_{n-1}}\omega_{k_{n-1}}\mu_{d-k_{n-1}}\left(\underset{1\leq i \leq n-2}{\cap} B(t_i,r_i) \cap (V+p) \right) dp \\
   &=\displaystyle \sum_{k_n=0}^d r_n^{k_n}\omega_{k_n}  \sum_{k_{n-1}=d-k_n}^d r_{n-1}^{k_{n-1}}\omega_{k_{n-1}} \int_{\R^{(n-3)d}}  dt_2\ldots dt_{n-2} \times\\
  &\displaystyle \quad\quad\int_{\mbox{Gr}(d,k_n)}d\nu^d_{k_n}(V) \int_{V^{\bot}}\mu_{d-k_{n-1}}\left(\underset{1\leq i \leq n-2}{\cap} B(t_i,r_i) \cap (V+p) \right) dp\\
 &=\displaystyle  \sum_{k_n=0}^d r_n^{k_n}\omega_{k_n}  \sum_{k_{n-1}=d-k_n}^d r_{n-1}^{k_{n-1}}\omega_{k_{n-1}} \int_{\R^{(n-3)d}}  dt_2\ldots dt_{n-2} \times\\
&  \displaystyle \quad\quad \int_{\mbox{Graff}(d, k)} \mu_{d-k_{n-1}}\left(\underset{1\leq i \leq n-2}{\cap} B(t_i,r_i) \cap V^* \right) d\lambda^d_k(V^*)\\
    &=\displaystyle  \sum_{k_n=0}^d r_n^{k_n}\omega_{k_n}  \sum_{k_{n-1}=d-k_n}^d r_{n-1}^{k_{n-1}}\omega_{k_{n-1}}\begin{bmatrix}
d-k_n+(d-k_{n-1})\\d-k_{n-1}
\end{bmatrix} \times\\
  &\displaystyle \quad\quad  \int_{\R^{(n-3)d}}   \mu_{d-k_n+(d-k_{n-1})}\left(\underset{1\leq i \leq n-2}{\cap} B(t_i,r_i) \right)dt_2\ldots dt_{n-2},
\end{align*}
where the last line follows from  the Crofton formula (\ref{croft}).

In summary, we have proved that
\begin{align*}
&\displaystyle \int_{\R^{(n-2)d}} \mu_{d-k_n}\left(\underset{1\leq i \leq n-1}{\cap} B(t_i,r_i)\right) dt_2\ldots dt_{n-1}
=\displaystyle  \sum_{k_{n-1}=d-k_n}^d r_{n-1}^{k_{n-1}}\omega_{k_{n-1}}\begin{bmatrix}
d-k_n+d-k_{n-1}\\d-k_{n-1}
\end{bmatrix}   \\
&\displaystyle \quad\quad\quad\quad\quad\quad\quad \quad\quad \quad\quad \times \int_{\R^{(n-3)d}}   \mu_{d-(k_n+k_{n-1}-d)}\left(\underset{1\leq i \leq n-2}{\cap} B(t_i,r_i) \right)dt_2\ldots dt_{n-2},
\end{align*}
then using this argument repeatedly, we obtain that
\begin{align*}
I=&\displaystyle \sum_{k_n=0}^d r_n^{k_n}\omega_{k_n} \sum_{k_{n-1}=d-k_n}^d r_{n-1}^{k_{n-1}}\omega_{k_{n-1}}\begin{bmatrix}
d-k_n+d-k_{n-1}\\d-k_{n-1}
\end{bmatrix} \times\\
&\displaystyle  \sum_{k_{n-2}=d-k_n+d-k_{n-1}}^d r_{n-2}^{k_{n-2}}\omega_{k_{n-2}}\begin{bmatrix}
d-k_n+d-k_{n-1}+d-k_{n-2}\\d-k_{n-2}
\end{bmatrix}  \times \ldots \times \\
&\displaystyle  \sum_{k_{2}=(n-2)d-(k_n+k_{n-1}+\ldots+k_3)}^d r_{2}^{k_{2}}\omega_{k_{2}}\begin{bmatrix}
(d-k_n)+\ldots+(d-k_{2})\\d-k_{2}
\end{bmatrix} \mu_{(n-1)d-\sum_{i=2}^n k_i}\left( B(t_1,r_1)\right).
\end{align*}

By comparing two formulas
$$\lambda_d\left( B(t_1,r_1)^{+\epsilon}\right)=\sum_{j=0}^d \epsilon^{d-j}\omega_{d-j}\mu_j(M)$$
and 
$$\lambda_d\left( B(t_1,r_1)^{+\epsilon}\right)=\lambda_d\left( B(t_1,r_1+\epsilon)\right)=\omega_d(r_1+\epsilon)^d=\omega_d\sum_{j=0}^d \binom{d}{j} \epsilon^{d-j}r_1^j,$$
it is clear that
$$\mu_j\left( B(t_1,r_1)\right)= \frac{\omega_d}{\omega_{d-j}}\binom{d}{j} r_1^j.$$ 

Thus we have
\begin{align*}
I=&\displaystyle \sum_{k_n=0}^d  \sum_{k_{n-1}=d-k_n}^d \ldots \sum_{k_{2}=(n-2)d-(k_n+k_{n-1}+\ldots+k_3)}^d r_1^{(n-1)d-\sum_{i=2}^n k_i} \times\\
&\displaystyle \prod_{i=2}^n \left( r_i^{k_i}\omega_{k_i} \begin{bmatrix}
(d-k_n)+\ldots+(d-k_{i})\\d-k_{i}
\end{bmatrix}\right) \frac{\omega_d}{\omega_{\sum_{i=2}^n k_i -(n-2)d}}\binom{d}{(n-1)d-\sum_{i=2}^n k_i}.
\end{align*}
Observe that
\begin{align*}
&\displaystyle \frac{\omega_d}{\omega_{\sum_{i=2}^n k_i -(n-2)d}}\binom{d}{(n-1)d-\sum_{i=2}^n k_i} \prod_{i=2}^n  \begin{bmatrix}
(d-k_n)+\ldots+(d-k_{i})\\d-k_{i}
\end{bmatrix}\\
 = &\displaystyle \frac{\omega_d}{\omega_{\sum_{i=2}^n k_i -(n-2)d}}\binom{d}{(n-1)d-\sum_{i=2}^n k_i} \prod_{i=2}^{n-1} \frac{\omega_{(d-k_n)+\ldots+(d-k_{i})} }{\omega_{(d-k_n)+\ldots+(d-k_{i+1})} \times \omega_{d-k_{i}} } \binom{(d-k_n)+\ldots+(d-k_{i})}{d-k_{i}}\\
 = &\displaystyle \frac{\omega_d \omega_{(n-1)d-\sum_{i=2}^n k_i}}{\omega_{\sum_{i=2}^n k_i -(n-2)d} \prod_{i=2}^{n} \omega_{d-k_i}} \times \frac{d!}{ \left[ \sum_{i=2}^n k_i -(n-2)d\right]! .\prod_{i=2}^{n} (d-k_i)!},
\end{align*}
this completes the proof.

\section{Comparing with Euler characteristic method}
In this section, we would like to compare our result given in the main theorem with the prediction given by Euler characteristic method. This prediction is defined in (\ref{ECH}) as
$$(1,0,\ldots,0)R^2\mu(S).$$
Note that  the indexes of the rows and columns of matrix $R$ varies from $0$ to $d$,  the term corresponding to $\mu_d(S)$ (or $\lambda_d(S)$) is
$$\lambda_d(S)b_d\sum_{0\leq h_1\leq  \ldots \leq h_{n-1} \leq d} \frac{\rho_{h_1}}{b_{h_1}} \frac{\rho_{h_2-h_1}}{b_{h_2-h_1}} \ldots \frac{\rho_{d-h_{n-1}}}{b_{d-h_{n-1}}},$$
and it is equivalent to
$$ \sum_{0\leq h_1\leq  \ldots \leq h_{n-1} \leq d} \frac{(2\pi)^{-d/2} \varphi^n(u) u^{d-n} \lambda_d(S) \Gamma(1/2)^{n-1}\Gamma((d+1)/2)}{\Gamma((h_1+1)/2) \Gamma((h_2-h_1+1)/2) \ldots \Gamma((d-h_{n-1}+1)/2)}.$$

To prove that the above sum coincides with the asymptotic formula (\ref{mafomu}), we need to show that
\begin{align*}
&\displaystyle \sum_{0\leq h_1\leq  \ldots \leq h_{n-1} \leq d} \frac{ \Gamma(1/2)^{n-1}\Gamma((d+1)/2)}{\Gamma((h_1+1)/2) \Gamma((h_2-h_1+1)/2) \ldots \Gamma((d-h_{n-1}+1)/2)}\\
=& \displaystyle \sum_{k_n=0}^d  \sum_{k_{n-1}=d-k_n}^d \ldots \sum_{k_{2}=(n-2)d- \sum_{i=3}^n k_i}^d \frac{\omega_d}{\omega_{\sum_{i=2}^n k_i -(n-2)d} \prod_{i=2}^{n} \omega_{d-k_i}} \times \frac{d!}{ \left[ \sum_{i=2}^n k_i -(n-2)d\right]! \prod_{i=2}^{n} (d-k_i)!}.
\end{align*}

Indeed, we rewrite the indices $(h_1,h_2,\ldots,h_{n-1})$ as
$$h_1=d-k_n,\; h_2=(d-k_n)+(d-k_{n-1}), \ldots , h_{n-1}=\sum_{i=2}^n (d-k_i),$$
and  it can be  checked one-by-one that
\begin{align*}
&\displaystyle \frac{ \Gamma(1/2)^{n-1}\Gamma((d+1)/2)}{\Gamma((d-k_n+1)/2) \Gamma((d-k_{n-1}+1)/2) \ldots \Gamma((d-k_2+1)/2)\Gamma \left(\left(d+1-\sum_{i=2}^n \left(d-k_i\right)\right)/2\right)}\\
=& \displaystyle \frac{\omega_d}{\omega_{d-\sum_{i=2}^n (d-k_i)} \prod_{i=2}^{n} \omega_{d-k_i}} \times \frac{d!}{ \left[ d-\sum_{i=2}^n (d-k_i)\right]! \prod_{i=2}^{n} (d-k_i)!}.
\end{align*}
The equality follows easily from Legendre duplication formula (\ref{legendre}) as in Subsection 4.1.

In conclusion, we give a one-term expansion for the conjunction probability of smooth Gaussian fields. It is interesting to see that this expansion coincide in the first term with the heuristic prediction given by Euler characteristic method although they look different at first sight. Since the heuristic prediction consists of $d+1$ terms, it is natural to ask that

\textit{"Could we prove the full validity of Euler characteristic method for the conjunction probability as for the tail distribution of a smooth Gaussian field?"}

We would like to leave this interesting question for future research.
 \section*{Acknowledgements} 
  This work is supported by Grant IM-VAST01-2020.01.  Part of this work was done during the author's post-doctoral fellowship of the Vietnam Institute for Advanced Study in Mathematics in 2016.

\end{document}